\def\VEC#1#2#3{#1_{#2},\ldots,#1_{#3}}
\def\sset{\subseteq}
\def\cS{\mathcal{S}}
\def\nin{\not\in}
\newcommand\xp{\operatorname{\hspace{-1pt}\mathaccent"017{\phantom{cc}}\hspace{-8pt}ch}}
\newcommand\mad{\operatorname{Mad}}
\def\extend#1{$#1$-extendable}
\newcommand\ch{\operatorname{ch}}
\newcommand{\floor}[1]{\left\lfloor#1\right\rfloor}
\def\set#1{\left\{#1\right\}}
\newtheorem*{thmgeneral}{Theorem~\ref{lem:contract}}
\newtheorem{theorem}{Theorem}[section]
\newtheorem{lemma}[theorem]{Lemma}
\theoremstyle{definition}
\theoremstyle{definition}
\newtheorem{definition}[theorem]{Definition}
\newtheorem{defn}[theorem]{Definition}
\newtheorem{ex}[theorem]{Example}
\theoremstyle{remark}
\newtheorem{rem}[theorem]{Remark}
\numberwithin{equation}{section}
\theoremstyle{plain}
\newtheorem{question}[theorem]{Question}
\newtheorem{cor}[theorem]{Corollary}
\begin{document}

\tikzset{
    Eedge/.style={
    line width = 3pt,gray!65
    }}

\title{Dynamic coloring parameters for graphs with given genus}
\author[S. Loeb]{Sarah Loeb}
\author[T. Mahoney]{Thomas Mahoney}
\author[B. Reiniger]{Benjamin Reiniger}
\author[J. Wise]{Jennifer Wise}
\thanks{University of Illinois at Urbana--Champaign\\
Research of the authors supported in part by NSF grant DMS 08-38434,
``EMSW21-MCTP: Research Experience for Graduate Students''.\\
emails: sloeb2@illinois.edu, tmahoney@emporia.edu, reiniger@ryerson.ca, jiwise2@illinois.edu}
\date{\today}

\begin{abstract}
A proper vertex coloring of a graph $G$ is \emph{$r$-dynamic}
if for each $v\in V(G)$, at least $\min\{r,d(v)\}$ colors appear in $N_G(v)$.
In this paper we investigate $r$-dynamic versions of 
coloring, list coloring, and paintability.
We prove that planar and toroidal graphs are 3-dynamically
10-colorable, and this bound is sharp for toroidal graphs.
We also give bounds on the minimum number of colors needed for any $r$
in terms of the genus of the graph: 
for sufficiently large $r$, 
every graph with genus $g$ is $r$-dynamically $((r+1)(g+5)+3)$-colorable
when $g\leq2$ and $r$-dynamically $((r+1)(2g+2)+3)$-colorable when $g\geq3$. Furthermore, each of these upper bounds for $r$-dynamic $k$-colorability
also holds for $r$-dynamic $k$-choosability and for $r$-dynamic $k$-paintability.
We develop a method to prove that certain configurations are reducible
for each of the corresponding $r$-dynamic parameters.
\end{abstract}

\maketitle

% Kim and Park proved 2-dyn 4-choos for planar graphs with girth 7?

\section{Introduction}
For a graph $G$ and positive integer $r$,
an \emph{$r$-dynamic coloring} of $G$ is a proper vertex coloring such that for each $v\in V(G)$,
at least $\min\{r,d(v)\}$ distinct colors appear in $N_G(v)$.
The \emph{$r$-dynamic chromatic number}, denoted $\chi_r(G)$,
is the minimum $k$ such that $G$ admits an $r$-dynamic $k$-coloring.
Montgomery~\cite{montgomery} introduced 2-dynamic coloring and the generalization to $r$-dynamic coloring. 

List coloring was introduced independently by Vizing~\cite{vizing} and by Erd\H{o}s, Rubin, and Taylor~\cite{ERT}.
A \emph{list assignment} $L$ for $G$ assigns to each vertex $v$ a list $L(v)$ of permissible colors.
Given a list assignment $L$ for a graph $G$, if a proper coloring $\phi$ can be chosen so that $\phi(v)\in L(v)$ for all $v\in V(G)$,
then $G$ is \emph{$L$-colorable}.
The \emph{choosability} of $G$ is the least $k$ such that $G$ is $L$-colorable for any list assignment $L$ satisfying $|L(v)|\ge k$ for all $v\in V(G)$.
We consider the $r$-dynamic version of this parameter.
For further work, see~\cite{AGJ,JKOW,KMW}.
A graph $G$ is \emph{$r$-dynamically $L$-colorable} when an $r$-dynamic coloring can be chosen from the list assignment $L$.
The \emph{$r$-dynamic choosability} of $G$, denoted $\ch_r(G)$,
is the least $k$ such that $G$ is \emph{$r$-dynamically $L$-colorable} for every list assignment $L$ satisfying $|L(v)|\ge k$ for all $v\in V(G)$.

Zhu~\cite{zhu} and Schauz~\cite{schauz} independently introduced an online version of choosability,
which is modeled by the following game.
\begin{definition}\label{def:paint}
Suppose $G$ is a graph and that each vertex $v\in V(G)$ is assigned a positive number $f(v)$ of \emph{tokens}.
The \emph{$f$-paintability game} is played by two players: Lister and Painter.
On the $i$th round, Lister \emph{marks} a nonempty set of uncolored vertices;
each marked vertex loses one token.
Painter responds by choosing a subset of the marked set that forms an independent set in the graph and assigning color $i$ to each vertex in that subset.
Lister wins the game by marking a vertex with no tokens,
and Painter wins by coloring all vertices.

We say $G$ is \emph{$f$-paintable} when Painter has a winning strategy in the $f$-paintability game.
When $G$ is $f$-paintable and $f(v)=k$ for all $v\in V(G)$,
we say that $G$ is \emph{$k$-paintable}.
The least $k$ such that $G$ is $k$-paintable is the \emph{paint number} (or \emph{online choice number}) of $G$, denoted by $\xp(G)$.

In the $f$-paintability game, Painter's goal is to generate a proper coloring of the graph.
We say that a graph $G$ is \emph{$r$-dynamically $k$-paintable} when
Painter has a winning strategy that produces an $r$-dynamic coloring of $G$ when all vertices have $k$ tokens.
The least $k$ such that Painter can accomplish this is the \emph{$r$-dynamic paint number}, denoted by $\xp_r(G)$.
%\hfill$\blacksquare$ Doug didn't like this. Thoughts? 
\end{definition}

The \emph{square} of a graph $G$, denoted $G^2$, is the graph resulting from adding an edge between every pair of vertices of distance 2 in $G$. For any graph $G$, it is clear that
\[
\begin{array}[b]{r@{\hspace{3pt}}r@{\hspace{3pt}}r@{\hspace{3pt}}r@{\hspace{3pt}}r@{\hspace{3pt}}r@{\hspace{3pt}}r@{\hspace{3pt}}r@{\hspace{3pt}}r@{\hspace{3pt}}r@{\hspace{3pt}}r@{\hspace{3pt}}r@{\hspace{3pt}}r@{\hspace{3pt}}r@{\hspace{3pt}}r@{\hspace{3pt}}}
\chi(G) & = & \chi_1(G) & \leq & \chi_2(G) & \leq & \dotsb & \leq & \chi_{\Delta(G)}(G) & = & \dotsb & = & \chi(G^2),\\
\ch(G) & = & \ch_1(G) & \leq & \ch_2(G) & \leq & \dotsb & \leq & \ch_{\Delta(G)}(G) & = & \dotsb & = & \ch(G^2),&\qquad (1)\\
\xp(G) & = & \xp_1(G) & \leq & \xp_2(G) & \leq & \dotsb & \leq & \xp_{\Delta(G)}(G) & = & \dotsb & = & \xp(G^2),
\end{array}
\]
and that $\chi_r(G)\leq\ch_r(G)\le\xp_r(G)$ for all $r$.
Thus we can think of $r$-dynamic coloring as bridging the gap between coloring a graph and coloring its square.

%% Original
%Montgomery~\cite{montgomery} introduced 2-dynamic colorings and the generalization to $r$-dynamic colorings. It is clear that
%\[ \chi(G) = \chi_1(G) \leq \chi_2(G) \leq \dotsb \leq \chi_{\Delta(G)}(G) = \dotsb = \chi(G^2), \]
%that the same holds when replacing ``$\chi$" by ``$\ch$" or ``$\xp(G)$", and that $\chi_r(G)\leq\ch_r(G)\le\xp(G)$ for all $r$. So $r$-dynamic colorings bridge the gap between coloring a graph and coloring its square.

Wegner~\cite{wegner} conjectured bounds for the chromatic number of squares of planar graphs given their maximum degree.
%Note that trivially $\chi(G^2)\le\Delta(G)^2+1$. 
For a graph $G$ with $\Delta(G)\le3$, proper colorings of $G^2$ and 3-dynamic colorings of $G$ are equivalent.
Thomassen~\cite{thomassen7} proved Wegner's conjecture for maximum degree 3,
showing that $\chi_3(G)\leq7$ for any planar subcubic graph $G$.
Cranston and Kim~\cite{CK} studied the list coloring version,
and proved that when $G$ is a planar subcubic graph,
$\ch_3(G)\le 7$ if the girth is at least 7 and $\ch_3(G)\le 6$ if the girth is at least 9.

Thomassen~\cite{thomassen5} proved that planar graphs are 5-choosable,
and Voigt~\cite{voigt} proved sharpness.
Schauz~\cite{schauz} further proved that planar graphs are 5-paintable.
Kim, Lee, and Park~\cite{KLP} proved that planar graphs are actually 2-dynamically 5-choosable. 
%%%%clarification was requested here, I've attempted it: 
Their proof involves showing that every planar graph has a planar supergraph with an edge in the neighborhood of every vertex.
They then invoke Thomassen's result that planar graphs are 5-choosable to obtain their result.
By using Schauz's result that planar graphs are 5-paintable instead, 
Kim, Lee, and Park's result is strengthened to the following corollary.
%%%end of modification

\begin{cor}\label{cor:5paint}
If $G$ is a planar graph, then $\xp_2(G)\le5$.
\end{cor}

Heawood~\cite{heawood} proved that for $g>0$,
graphs of (orientable) genus $g$ are $(h(g)-1)$-degenerate and hence $h(g)$-colorable, where
\[ h(g)=\floor{\frac{7+\sqrt{1+48g}}{2}}. \]
%B\"ohme, Mohar, and Stiebitz~\cite{BMS} observed that such graphs  $h(g)$-choosable.
Because $(k-1)$-degenerate graphs are $k$-paintable, this also shows that graphs with genus $g$ are $h(g)$-paintable.
Chen et al.~\cite{CFLSS} proved that such a graph is 2-dynamically $h(g)$-choosable.
Mahoney~\cite{mahoney} strenthened their result to prove that such a graph is 2-dynamically $h(g)$-paintable.

Our main results are on the 3-dynamic chromatic number, choosability, and paint numbers for planar and toroidal graphs.  We will call a graph \emph{toroidal} if it can be drawn on the torus without crossing edges; in particular, we consider planar graphs to also be toroidal.

\begin{theorem} \label{thm:maintorus}
If $G$ is a toroidal graph, then $\chi_3(G)\le\ch_3(G) \le \xp_3(G) \le 10$. 
\end{theorem}

Theorem~\ref{thm:maintorus} is sharp: the Petersen graph $P$ has maximum degree 3 and diameter 2, so $\chi_3(P)=\chi(P^2)=\chi(K_{10})=10$.

Our proofs use the Discharging Method.
A \emph{configuration} in a graph is a set of vertices that satisfies some specified condition,
for example, a condition on the degrees or adjacencies of the vertices in the configuration.
We say that a configuration in a graph is \emph{reducible}
for a graph property if it cannot occur in a minimal graph failing that property.
We say that a partial coloring of a graph $G$ \emph{extends}
if the uncolored vertices can be assigned colors so that the coloring for all of $V(G)$ is an $r$-dynamic coloring of $G$. 

As an immediate corollary of Theorem~\ref{thm:maintorus}, we have that: 

\begin{cor} \label{cor:mainplane}
If $G$ is a planar graph, then $\chi_3(G)\le\ch_3(G) \le \xp_3(G)\le 10$. 
\end{cor}

We do not believe that Corollary~\ref{cor:mainplane} is sharp; 
the proof we give relies heavily on showing that the configuration consisting of two adjacent vertices both having degree 3 is reducible for $3$-dynamic $10$-paintability.
An example of a planar graph $G$ with $\chi_3(G)=7$ is the graph obtained from $K_4$ by subdividing the three edges incident to one vertex, shown in Figure~\ref{fig:K4}.
Note that $G$ has maximum degree 3 and diameter 2, so $\chi_3(G)=\chi(G^2)=\chi(K_7)$.

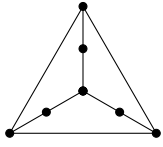
\begin{figure} \centering
%\begin{minipage}{.45\linewidth}%subfigure}[b]{.45\linewidth}
%\centering
\begin{tikzpicture}[scale=.75]
\path (0,0) coordinate (V);
\foreach \i in {1,2,3} {\foreach \j in {1,2} {
\path (V)++(90+120*\i:.75*\j cm) coordinate (X\j\i);}}
\foreach \i in {1,2,3} {
\draw (V) -- (X2\i);
\fill[draw=black] (V) circle (2pt); 
\fill[draw=black] (X1\i) circle (2pt); 
\fill[draw=black] (X2\i) circle (2pt);}
\draw (X21) -- (X22) -- (X23) -- cycle;
\end{tikzpicture}
\caption{Example for Corollary~\ref{cor:mainplane}}
\label{fig:K4}
%\end{minipage}
%\begin{minipage}{.45\linewidth}
%\centering
%\begin{tikzpicture}[scale=.75]
%\tikzstyle{vertex}=[circle, fill, inner sep=0pt, minimum size=4pt];
%\def\s{1.5};
%\def\r{1};
%\draw[blue] (-\s,-\s)--(-\s,\s); \draw[blue] (\s,\s)--(\s,-\s);
%\draw[red] (-\s,-\s)--(\s,-\s);   \draw[red] (\s,\s)--(-\s,\s);
%\draw (0,0) circle (\r);
%\node[vertex] (34) at (45:\r) {};
%\node[vertex] (15) at (90:\r) {};
%\node[vertex] (24) at (112.5:\r) {};
%\node[vertex] (35) at (135:\r) {};
%\node[vertex] (14) at (180:\r) {};
%\node[vertex] (23) at (270:\r) {};
%\node[vertex] (45) at (315:\r) {};
%\node[vertex] (13) at (337.5:\r) {};
%\node[vertex] (25) at (0:\r) {};
%\node[vertex] (1234) at (\s,\s) {};
%\node[vertex] (1235) at (-\s,\s) {};
%\node[vertex] (1245) at (\s,-\s) {};
%\node[vertex] (120) at (-\s,-\s) {};
%\draw (15)--(0,\s); \draw (0,-\s)--(23);
%\draw (14)--(-\s,0); \draw (\s,0)--(25);
%\draw (35)--(1235);
%\draw (34)--(1234);
%\draw (45)--(1245);
%\draw (24)--(13);
%\end{tikzpicture}
%\caption{An embedding of the Petersen graph on the torus.}
%\end{minipage}
\end{figure}

In Section~\ref{sec:framework}, we provide a reduction procedure and a winning strategy for Painter.
%that allows us to obtain an auxiliary graph $G'$ from a graph $G$ such that $V(G') \sset V(G)$.
When any $r$-dynamic coloring of $G'$ may be extended to vertices of $V(G)-V(G')$ and form an $r$-dynamic coloring of $G$,
we give additional conditions under which Painter can win the $r$-dynamic paintability game on $G$.
%We use this reduction procedure in the proof of our main results.

In Section~\ref{sec:reducibleconfigs}, we show that several configurations are reducible for $3$-dynamic $10$-paintability.
Our reduction procedure from Section~\ref{sec:framework} also implies that each configuration is reducible for
$3$-dynamic $10$-choosability and $3$-dynamic $10$-colorability.
In Section~\ref{sec:discharging}, we complete the proof of Theorem~\ref{thm:maintorus}
by using the Discharging Method to show that the configurations listed in Section~\ref{sec:reducibleconfigs}
form a set that is unavoidable by a toroidal graph.
%In Section~\ref{sec:toroidal3}, we extend these results to toroidal graphs by 
%providing an additional configuration that is reducible for 3-dynamic 10-paintability.
%We then refine the discharging arguments from Section~\ref{sec:discharging}
%to complete the proof of Theorem~\ref{thm:maintorus}.

Finally, in Section~\ref{sec:rdynamic}, we consider graphs with higher genus.
Let $\gamma(G)$ denote the minimum genus of a surface on which $G$ embeds.

\begin{theorem}\label{lem:contract}
Let $G$ be a graph, and let $g=\gamma(G)$.
\begin{enumerate}
\item If $g \le 2$ and $r \ge 2g + 11$, then $\xp_r(G) \le (g+5)(r+1)+3$.
\item If $g \ge 3$ and $r \ge 4g + 5$, then $\xp_r(G) \le (2g+2)(r+1)+3$.
\end{enumerate}
%\\
%If $r\ge\left\{\begin{array}{ll}
%2g+11,&\text{when $g\leq2$,}\\
%4g+7,&\text{else,}\end{array}\right.$
%then $\xp_r(G)\le\left\{\begin{array}{ll}
%(g+5)(r+1)+3,&\text{when $g\leq2$,}\\
%(2g+2)(r+1)+3,&\text{else.}\end{array}\right.$
\end{theorem}

Even though Theorem~\ref{lem:contract} requires a lower bound on $r$, by (1), it gives an upper bound on $\xp_r(G)$ for all $r$.

\section{Framework for Reducibility Arguments}\label{sec:framework}

In Sections~\ref{sec:reducibleconfigs}
 and~\ref{sec:rdynamic}, 
we consider several reducibility arguments.  
From a high-level view, standard reducibility arguments involve extending a coloring of only part of a graph to the whole graph.
Remark~\ref{rem:extend} provides a general method for making reducibility arguments in the
paintability, choosability, and classical graph coloring contexts. 

Remark~\ref{rem:extend} is phrased in terms of the strongest parameter, paintability.
We must address the issue that in the $f$-paintability game,
Painter cannot ``wait until the end'' to color a particular part of the graph.
Given a graph $G$ and $S\sset V(G)$, instead of extending a coloring of $G-S$ to $S$ as in colorability or choosability,
there will be rounds when Painter must color vertices in both $S$ and $G-S$.
Remark~\ref{rem:extend} gives a strategy for Painter to color vertices in $S$
on the same round as vertices in $G-S$ when certain conditions are met.

\smallskip
Given a graph $G$ and a vertex $v\in V(G)$,
let the \emph{closed neighborhood} of $v$, denoted $N_G[v]$, be $N_G(v)\cup\set{v}$.
Additionally, for a set $S\sset V(G)$, let $N_G[S]=\bigcup_{v\in S}N_G[v]$ and $N_G(S)=N_G[S]-S$.
Let $[k]=\set{1,2,\dots,k}$.
We say a vertex $v$ is \emph{rejected} in the Lister/Painter game each time $v$ is marked by Lister,
but not colored by Painter.

%%% Copy/paste from thesis
Let $G$ and $G'$ be graphs satisfying $V(G')\sset V(G)$.
We say that $G'$ is \emph{$r$-extendable to $G$}
if every $r$-dynamic coloring of $G'$ extends to an $r$-dynamic coloring of $G$.
It is not necessary for $G'$ to be a subgraph of $G$.
In fact, Example~\ref{ex:notsubgraph} shows one instance where it is necessary for $G'$ to contain edges that are not in $E(G)$.

\begin{ex}\label{ex:notsubgraph}
Let $r>1$ and let $G$ be a graph.
Suppose that $x\in V(G)$ has degree 2 with $N(x)=\set{y,z}$ and that $yz\nin E(G)$.
Let $G'=G-x$.
Any $r$-dynamic coloring of $G'$ that gives $y$ and $z$ the same color does not extend to an $r$-dynamic coloring of $G$
since $N(x)$ will not contain at least 2 colors.

We overcome this problem by letting $G'=(G\cup\set{yz})-x$.
Any $r$-dynamic coloring of $G'$ gives $y$ and $z$ different colors, so $N(x)$ always receives two colors.
However, while an $r$-dynamic coloring of $G'$ gives $N_{G'}(w)$ at least $\min\set{r,d(w)}$ colors for $w\in\set{y,z}$, 
it may be the case that $N_G(w)-v$ receives only $\min\set{r,d(w)}-1$ colors for $w\in\set{y,z}$
since $y$ and $z$ are not neighbors of each other in $G$.
By forcing $x$ to avoid $\min\set{r,d(w)}-1$ colors used on $N_G(w)-v$ for $w\in\set{y,z}$,
we can ensure that an $r$-dynamic coloring of $G'$ extends to an $r$-dynamic coloring of $G$.
\hfill$\blacksquare$
\end{ex}

When $G'$ is $r$-extendable to $G$, a winning strategy for Painter on $G'$
may be combined with a strategy on $G-G'$ to produce an $r$-dynamic coloring of $G$.
To state this process more formally, we give the following definition.

\begin{defn}\label{def:first}
Let $G'$ be a graph that is $r$-extendable to $G$.
We say that Painter plays a \emph{$G'$-first strategy} in the $r$-dynamic $k$-paintability game on $G$ if the following conditions are satisfied:
\begin{itemize}
\item $G'$ is $r$-dynamically $k$-paintable.
\item For any marked set $M$ in the game on $G$, Painter's response $D$ contains a winning response $D'$ to the marked set $M\cap V(G')$ in the $r$-dynamic $k$-paintability game on $G'$.
\item At the end of each round, the colored vertices form a partial coloring of $G$ that extends to an $r$-dynamic coloring of $V(G)$.
\hfill$\blacksquare$
\end{itemize}
\end{defn}

If Painter wins the $r$-dynamic $k$-paintability game on $G'$ by playing a $G'$-first strategy $\cS$,
then Painter is always winning in the auxiliary game being played on $G'$, 
regardless of what is being marked and colored in $V(G)-V(G')$.
Let $T=V(G)-V(G')$.
By giving an upper bound on how many times Painter rejects each vertex of $T$ by playing according to $\cS$,
we show that Painter has a winning strategy in the game on $G$.
The following remark is stated more generally in~\cite{mahoney}.

\begin{rem}\label{rem:extend}
Given graphs $G$ and $G'$,
where $G'$ is both $r$-dynamically $k$-paintable and $r$-extendable to $G$,
let $T=V(G)-V(G')$.
If Painter has a $G'$-first strategy on $G$ such that each $v\in T$ is rejected at most $k$ times,
then $G$ is $r$-dynamically $k$-paintable.
\end{rem}

Remark~\ref{rem:extend} holds because it describes the conditions 
for an inductive strategy to succeed for Painter.
We may relax ``$k$-paintable'' in both the hypothesis and conclusion of Remark~\ref{rem:extend}
to be ``$k$-choosable'' or ``$k$-colorable'' and still obtain the desired result.
Thus Remark~\ref{rem:extend} serves as a general tool
for proving upper bounds on the $r$-dynamic chromatic, choice, and paint numbers.

We now give an application of Remark~\ref{rem:extend}.
Kim and Park~\cite{KP} use the Discharging Method to give results about 2-dynamic choosability of sparse graphs.
Their proof omits proving that 1-vertices are reducible for 2-dynamic 4-choosability.
We give a complete proof and strengthen their result by showing that 
1-vertices and 2-vertices adjacent to $3^-$-vertices are reducible for 2-dynamic 4-paintability. 

The \emph{maximum average degree} of a graph $G$, denoted $\mad(G)$,
is $\max_{H\sset G}\frac{2|E(H)|}{|V(H)|}$.
%Kim and Park~\cite{KP} used the Discharging Method to prove that
%planar graphs with girth at least 7 are 2-dynamically 4-choosable.

%Their proof uses that both 1-vertices and 2-vertices adjacent to $3^-$-vertices are reducible for 2-dynamic 4-choosability,
%Using discharging, they show that every planar graph with girth at least
%Using Theorem~\ref{thm:paint}, we prove that planar graphs with girth at least 7 are 2-dynamiclly 4-paintabile
%by strengthening the reducibility arguments.

%%%************************can this be rephrased to say that the following configurations are reducible for 2-dynamic 4-paintability? 
\begin{lemma}[\cite{KP}]\label{lem:KPunavoid}
The following configurations form an unavoidable set for $2$-dynamic $4$-paintability
when $G$ is a planar graph with girth at least 7 or
a graph other than $C_5$ satisfying $\mad(G)<\frac{8}{3}$:
\begin{enumerate}
\item a $1$-vertex,
\item a $2$-vertex adjacent to a $3^-$-vertex.
\end{enumerate}
\end{lemma}

Kim and Park~\cite{KP} use the choosability version of Lemma~\ref{lem:KPunavoid}
to show that graphs with maximum average degree less than $8/3$ and
planar graphs of girth at least 7 are 2-dynamically 4-choosable.
However, $\ch_2(C_5) = 5$.
We use Lemma~\ref{lem:KPunavoid} to correct and strengthen these results. 

%By showing that each of these configurations is reducible for 2-dynamic 4-paintability,
%we prove the following theorem.

\begin{theorem}\label{thm:KPstrengthen}
Let $G$ be a connected graph other than $C_5$.
If $\mad(G)<\frac83$ or $G$ is a planar with girth at least 7,
then $\xp_2(G)\le4$.
\end{theorem}

\begin{proof}
If $\Delta(G)\le2$, then $\xp_2(G)=\xp(G^2)$, and so $\Delta(G^2)\le4$.
By the paintability analogue of Brooks' Theorem~\cite{HKS},
we have that $\xp(G^2)\le\Delta(G^2)\le4$ unless $G^2$ is a complete graph of order at least 5.
Since $G\ne C_5$, we conclude that $\xp_2(G)\le4$.

Thus $\Delta(G)>2$,
and it suffices by Lemma~\ref{lem:KPunavoid} to show that a 1-vertex
and a 2-vertex adjacent to a $3^-$-vertex are reducible for 2-dynamic 4-paintability.
To prove reducibility, we may assume that $G$ is a minimal graph failing to be 2-dynamically 4-paintable.
In each case, we define $G'$, which is 2-dynamically 4-paintable by minimality.
We then assume that Painter plays according to a $G'$-first strategy
and use Remark~\ref{rem:extend} to show that Painter wins the game on $G$.

\textbf{Case 1:} Let $v$ be a 1-vertex.\\
Let $G'=G-v$.
Since $K_2$ is trivially 2-dynamic 4-paintable,
we may assume that $N_G(v)=\set{u}$ and $d(u)>1$.
If $v$ is marked on the round in which $u$ is colored, then Painter rejects $v$.
Painter also rejects $v$ if there is a round in which all other neighbors of $u$ are being colored.
Thus $v$ is rejected at most two times.

\textbf{Case 2a:} Let $d(u)=d(v)=2$ where $uv\in E(G)$.\\
Since $\Delta(G)>2$, we may select $u$ so that its other neighbor $u'$ has degree at least 3.
Let $v'$ be the neighbor of $v$ other than $u$.
Let $G'=G-\set{u,v}$.
Painter rejects $u$ on rounds in which $v$ or $u'$ is colored.
Painter also rejects $u$ in a round if $v$ is not yet $2$-dynamic but $v'$ is being colored.
Painter rejects $v$ on rounds in which $v'$ is colored.
Painter also rejects $v$ in a round if $u$ is not yet $2$-dynamic but that $u'$ is being colored.
Lastly, Painter rejects $v$ in a round if $v'$ is not yet $2$-dynamic but that another neighbor of $v'$ is being colored.
Thus $u$ and $v$ are each rejected at most three times.

\textbf{Case 2b:} Let $d(u)=3$ and $d(v)=2$ where $uv\in E(G)$.\\
Let $T$ be the set of $2$-neighbors of $u$, let $G'=G-u-T$, and let $v'$ be the neighbor of $v$ other than $u$.
Note that $T$ is nonempty since $v\in T$.
Case 2a implies that for $w\in T$, the neighbor of $w$ other than $u$ has degree at least 3.
Painter rejects $u$ whenever the other neighbor of a vertex in $T$ is colored.
For $w\in T$, let $w'$ be the neighbor of $w$ other than $u$.
Painter rejects each $w\in T$ whenever $u$, $w'$, or $v'$ is colored.
Thus each vertex in $T\cup{u}$ is rejected at most three times.

In each case, Remark~\ref{rem:extend} implies that the configuration is reducible for 2-dynamic 4-paintability.
\end{proof}

\section{Reducibility Arguments}\label{sec:reducibleconfigs}

%%% To make ``observe G' is 3-extendable to G'' clearer, justify the following:
% 1. If G' is a subgraph of G, the only problem with it being 3-extendable is a deleted vertex not being 3-dynamic
% 2. Adding an edge in G' between two neighbors of a deleted vertex will ensure that it has more colors
% 3. In G, where the edge is not present, the deleted vertex may need to add a color.

We use ``reducible'' in this section to mean ``reducible for 3-dynamic 10-paintability''.
We say that a vertex $w$ is \emph{dull}
if $w$ has fewer than $\min\set{3,d_G(w)}-1$ different colors on $N_G(w)$
during a round in the 3-dynamic 10-paintability game.

To show that each of the following configurations is reducible,
by Remark~\ref{rem:extend} it suffices to find a graph $G'$ that is \extend{3} to $G$ and embeds on the torus, 
and a $G'$-first strategy for Painter.
Anytime that our construction $G'$ has edges added to $G$, we ignore multiedges.
For example, in Lemma~\ref{adjacent3and3}, if $y_1z_2,y_2z_2\in E(G)$, then $G'=G-\set{v_1,v_2}$. Additionally, in all cases, the edge can be added to the induced embedding to get a toroidal embedding of $G'$. 

In some cases we must avoid coloring a vertex $v\in V(G)-V(G')$ in a round 
if it has a dull neighbor $u$ such that $uw\in E(G')-E(G)$ and a neighbor of $u$ is being colored by Painter in that round.
The coloring of $V(G')$ that results from the $G'$-first strategy may be $r$-dynamic,
but the color on $w$ doesn't contribute towards $u$ being $r$-dynamic in $G$ since $uw\nin E(G)$.
By ensuring that $v$ adds a color to $N_G(u)$ when $u$ has fewer than $\min\set{3,d_G(w)}-1$ colors,
we prevent $N_G(u)$ from having fewer than $\min\set{d(u),r}$ colors at the end of the game,
even when the color on $w$ is ``lost''.
Note that a vertex $v$ is rejected because a neighbor is dull at most twice for that neighbor.

In the figures for the reducible configurations,
the thick, gray edges represent the edges possibly added by $E'(G)$
and the dashed lines enclose the vertices of $S$.

\begin{lemma}\label{2^-vertex}
A vertex of degree at most $2$ is reducible.
\end{lemma}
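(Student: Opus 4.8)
The plan is to delete $v$, add at most one edge to protect $v$'s own dynamic requirement, and then color $v$ lazily while running Painter's winning strategy on the smaller graph.

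First I would set $G'=G-v$ when $d_G(v)\le 1$, and $G'=(G-v)+y_1y_2$ when $d_G(v)=2$ with $N_G(v)=\set{y_1,y_2}$, ignoring the multiedge when $y_1y_2\in E(G)$. The reason for the edge is that $v$'s dynamic constraint asks for $\min\set{3,d_G(v)}=d_G(v)$ colors on $N_G(v)$, so when $d_G(v)=2$ the two neighbors must receive different colors; forcing $y_1y_2\in E(G')$ guarantees exactly this in every proper coloring of $G'$, while for $d_G(v)\le 1$ the constraint on $v$ is vacuous. I would then verify that $G'$ is \extend{3} to $G$ and, invoking the remark in the preamble that the added edge can be routed through the face vacated by $v$, that $G'$ embeds on the torus. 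By Remark~\ref{rem:extend} it then suffices to give a $G'$-first strategy.

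Next I would specify the strategy. For every vertex of $V(G')=V(G)-v$ Painter follows her winning $3$-dynamic $10$-paintability strategy on $G'$, feeding it the restriction $M\cap V(G')$ of each set $M$ that Lister marks. The vertex $v$ is colored lazily: in any round in which $v$ is marked, Painter colors $v$ unless she is blocked. The heart of the argument is a token count for $v$. I would show that $v$ can be blocked for only two reasons: (i) the $G'$-strategy colors a neighbor $y_i$ of $v$ in that round, which (since $y_i$ is then removed from play) can happen at most once per neighbor, hence at most twice; or (ii) $v$ is held back to protect a dull neighbor $y_i$ across the edge $y_1y_2\in E(G')-E(G)$, which by the preamble occurs at most twice per dull neighbor, hence at most four times. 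Thus $v$ is blocked in at most six rounds, so Painter colors $v$ by its seventh marking at the latest, well inside its budget of $10$ tokens.

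Finally I would check that the coloring is proper and $3$-dynamic in $G$: properness is automatic in the paintability game, $v$'s requirement is met by the added or preexisting edge $y_1y_2$, and deleting-then-recoloring $v$ only adds colors to the neighborhoods of the remaining vertices. The one place that needs care is reason~(ii): when $y_1y_2\nin E(G)$ the color on $y_2$ contributes to $y_1$ in $G'$ but not in $G$, so Painter's laziness must be timed so that $v$'s eventual color supplies that missing color to $N_G(y_1)$, and symmetrically for $y_2$. The bound of at most two such rejections per dull neighbor is exactly what keeps this reconciliation inside $v$'s token budget, and that interplay between the dull-neighbor obligation and the token count is the main obstacle; the rest is bookkeeping.
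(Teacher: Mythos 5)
Your construction of $G'$ and your analysis of the generic $2$-vertex case (where $y_1y_2$ is genuinely added) match the paper's proof, but your blocking rules have a real gap: you invoke the dull-neighbor protection only ``across the edge $y_1y_2\in E(G')-E(G)$,'' i.e., only when an edge was actually added. That makes your reason~(ii) vacuous in the $1$-vertex case (where $G'=G-v$ and $E(G')-E(G)=\emptyset$) and in the $2$-vertex case when $y_1y_2\in E(G)$ already. The dull mechanism, however, is not only compensation for colors ``lost'' across added edges; it is also needed because deleting $v$ lowers a neighbor's degree, so the winning strategy on $G'$ guarantees fewer colors around that neighbor than it needs in $G$. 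Concretely, suppose $d_G(v)=1$, $N_G(v)=\{u\}$, $d_G(u)=3$, and $N_G(u)=\{v,a,b\}$. The $G'$-strategy promises only $\min\{3,d_{G'}(u)\}=2$ distinct colors on $\{a,b\}$, while $u$ needs $\min\{3,d_G(u)\}=3$ distinct colors on $N_G(u)$; hence $v$ must end up with a color different from those on both $a$ and $b$. Under your rules, $v$ is blocked only when $u$ itself is being colored, so Lister can mark $\{v,a\}$ in a round in which the $G'$-strategy colors $a$; your Painter then colors $v$ in that same round, $v$ and $a$ receive the same round color, and $N_G(u)$ finishes the game with only two colors. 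Painter loses without $v$ ever exhausting its tokens. (Whether this attack fires depends on the internal behavior of the winning $G'$-strategy, which your proof does not and cannot control, so the argument does not go through as written.)

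The repair is exactly the paper's rule: in all cases, also reject $v$ in any round in which a neighbor of $v$ (namely $u$, or $y_i$) is dull and one of that vertex's neighbors is being colored, whether or not an edge was added at that vertex. Because each round introduces a brand-new color, every such rejected round strictly increases the number of colors on the dull vertex's neighborhood, so this costs at most two rejections per neighbor of $v$; the totals become at most $1+2=3$ rejections in the $1$-vertex case and at most $2+4=6$ in the $2$-vertex case, still well within the budget. With that rule in place, when $v$ is finally colored, either its neighbor already has $\min\{3,d_G(u)\}-1$ colors in its neighborhood and the current round contributes a fresh one, or no other neighbor of that vertex is colored in $v$'s round and $v$'s round color is new to its neighborhood; in both cases the extension to a $3$-dynamic coloring of $G$ succeeds.
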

\begin{proof}
Suppose $G$ has a $1$-vertex $v$ with neighbor $u$ (Figure~\ref{fig:2^-vertex}$(i)$). 
Let $G'=G-v$.
Observe that $G'$ is \extend{3} to $G$. 
Painter plays a $G'$-first strategy and rejects $v$ on a round
if $u$ is being colored that round according to the $G'$-first strategy.
Painter also rejects $v$ on rounds in which $u$ is dull and a neighbor of $u$ is being colored.
Since $v$ is rejected at most 3 times, Painter wins by Remark~\ref{rem:extend}.

Suppose $G$ has a $2$-vertex $v$ with neighbors $y$ and $z$ (Figure~\ref{fig:2^-vertex}$(ii)$).
Let $G'=(G\cup{yz})-v$. Because $y$ and $z$ are on the same face, we may add the edge $yz$ to $G'$.
Since $yz$ is in $E(G')$, we ensure that when Painter plays a $G'$-first strategy, 
the colors on $y$ and $z$ will be distinct.
Thus $G'$ is \extend{3} to $G$.
%However, since $yz$ is not necessarily in $E(G)$,
%Painter may need $v$ to provide a new color to $N_G(y)$ and $N_G(z)$ to make $y$ and $z$ 3-dynamic at the end of the game.
Painter rejects $v$ on rounds in which $z$ is colored
or when $z$ is dull and a neighbor of $z$ is being colored.
Symmetrically, Painter rejects $v$ in the corresponding situations with $y$.
Since $v$ is rejected at most $6$ times, Painter wins.
\end{proof}

\tikzstyle{tom label} = [draw=none, fill=none, right]
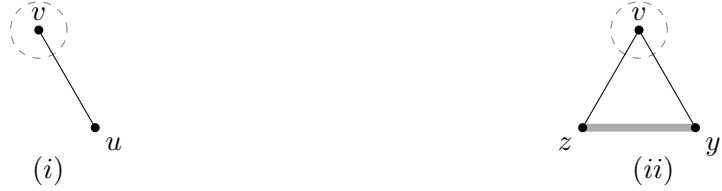
\begin{figure} \centering
\begin{subfigure}[b]{.45\linewidth} \centering
\begin{tikzpicture}[scale=.75]
\path (300:0cm) coordinate (X1); \path (300:2cm) coordinate (X2);
\fill[fill=white,draw=gray,dashed] (X1) circle (.5cm);
\draw (X1) -- (X2);
\fill[draw=black] (X1) circle (2pt);
\fill[draw=black] (X2) circle (2pt);
\node[above] at (X1) {$v$}; 
\node[below right] at (X2) {$u$}; 
\node[tom label] at (-0.3,-2.5) {$(i)$};
\end{tikzpicture}
\end{subfigure}
~~
\begin{subfigure}[b]{.45\linewidth} \centering
\begin{tikzpicture}[scale=.75]
\path (90:0cm) coordinate (X1); \path (300:2cm) coordinate (X2); \path (240:2cm) coordinate (X3);
\fill[fill=white,draw=gray,dashed] (X1) circle (.5cm);
\draw (X1) -- (X2);
\draw (X1) -- (X3);
\draw[Eedge] (X2) -- (X3);
\fill[draw=black] (X1) circle (2pt);
\fill[draw=black] (X2) circle (2pt);
\fill[draw=black] (X3) circle (2pt);
\node[above] at (X1) {$v$}; 
\node[below right] at (X2) {$y$}; 
\node[below left] at (X3) {$z$}; 
\node[tom label] at (-0.3,-2.5) {$(ii)$};
\end{tikzpicture}
\end{subfigure}
\caption{Configuration for Lemma~\ref{2^-vertex}: A $1$-vertex $v$ with neighbor $u$ or a $2$-vertex $v$ with neighbors $y$ and $z$.}
\label{fig:2^-vertex}
\end{figure}

\begin{lemma}\label{adjacent3and3}
A pair of adjacent $3^-$-vertices is reducible.
\end{lemma}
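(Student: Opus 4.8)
The plan is to mimic the proof of Lemma~\ref{2^-vertex}. First I would dispose of the easy subcase: if either vertex of the pair has degree at most $2$, then that vertex is already reducible by Lemma~\ref{2^-vertex}, so I may assume both $v_1$ and $v_2$ have degree exactly $3$. Write the neighbors of $v_1$ as $v_2,y_1,y_2$ and of $v_2$ as $v_1,z_1,z_2$ (allowing the $y_i$ and $z_j$ to coincide, which is the shared-neighbor subcase). I then take $G'=G-\set{v_1,v_2}$ and add a few edges among $\set{y_1,y_2,z_1,z_2}$ so as to force, in every proper $3$-dynamic coloring of $G'$, that the two non-$v$ neighbors of $v_1$ receive distinct colors and likewise for $v_2$; these are exactly the thick gray edges in the figure. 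When such a forcing edge is already present in $G$ I add nothing (ignoring multiedges), which is why $G'=G-\set{v_1,v_2}$ precisely when $y_1z_2,y_2z_2\in E(G)$. Since $v_1$, $v_2$, and the four outer neighbors lie on a single face after the deletion, each added edge can be drawn in that face, and any pair of added edges that would cross in the plane can be accommodated on the torus; this gives the required toroidal embedding of $G'$.

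Next I would check that $G'$ is \extend{3} to $G$. Because the added edges force the two outer neighbors of each $v_i$ to be distinctly colored, a proper $3$-dynamic coloring of $G'$ already makes $c(y_1)\neq c(y_2)$ and $c(z_1)\neq c(z_2)$ (after the appropriate relabeling in the shared-neighbor case). To extend such a coloring I color $v_1$ and $v_2$ last: the color of $v_1$ must avoid the at most five colors on $\set{v_2,y_1,y_2,z_1,z_2}$ (properness together with the dynamic constraint of $v_2$), and symmetrically for $v_2$, so with $10$ colors both can always be colored to complete a proper $3$-dynamic coloring of $G$.

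Painter then plays a $G'$-first strategy. For $v_1$ I reject it on any round in which one of its $G'$-neighbors $y_1,y_2$ is being colored, and on any round in which such a neighbor is dull and a neighbor of it is being colored; the second kind of round occurs at most twice per neighbor by the discussion preceding Lemma~\ref{2^-vertex}, since an added edge $uw\in E(G')-E(G)$ can mask the dullness of an outer neighbor $u$. I treat $v_2$ symmetrically with respect to $z_1,z_2$, and I coordinate the final coloring of the adjacent pair $v_1,v_2$ so that their mutual edge and each other's dynamic constraint are respected. Counting as in Lemma~\ref{2^-vertex}, each outer neighbor accounts for at most $1+2=3$ rejections, and the mutual dependence of the adjacent pair adds only a bounded few more, keeping the total strictly below $10$; Painter then wins by Remark~\ref{rem:extend}.

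The main obstacle I anticipate is the bookkeeping of rejections caused by the adjacency of $v_1$ and $v_2$: unlike the single low-degree vertex of Lemma~\ref{2^-vertex}, here both deleted vertices are neighbors of each other and each is needed to repair the other's dynamic constraint, so I must color them in a coordinated final step and argue carefully that this does not push either vertex's rejection count up to $10$. A secondary obstacle is the case analysis for the forcing edges together with verifying a toroidal embedding in each case, in particular checking that the extra edges in $E(G')-E(G)$ never rob an outer neighbor of a color it genuinely needs in $G$, which is exactly the situation the discussion preceding Lemma~\ref{2^-vertex} warns about.
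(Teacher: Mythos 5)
Your construction of $G'$ is the same as the paper's (delete $v_1,v_2$ and add the two forcing edges joining the outer neighbors of each $v_i$), and your extendability argument is fine. The gap is in Painter's strategy. As you state it, $v_1$ is rejected only when $y_1$ or $y_2$ is being colored, or is dull while a neighbor is being colored, and $v_2$ only in the symmetric situations with $z_1,z_2$; everything else is deferred to ``coordinating the final coloring of the adjacent pair.'' But coordination of that kind is not a move Painter has: in the paintability game the color a vertex receives is forced to be the color of the round in which Painter accepts it, and the only control Painter has over the deleted vertices is the accept/reject decision each time Lister presents them. With only your stated rejection rules, Lister can present $v_1$ in a round in which $z_1$ is being colored by the $G'$-first strategy; Painter accepts, $v_1$ gets $z_1$'s color, and then $N_G(v_2)=\set{v_1,z_1,z_2}$ carries only two distinct colors, so the final coloring is not $3$-dynamic and Painter loses. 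The constraint you correctly identified in your extendability paragraph --- that $v_1$'s color must avoid the colors on $z_1$ and $z_2$ --- must be enforced by explicit rejection rules, because rejection is the only mechanism available.

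The paper does exactly this: each $v_i$ is rejected whenever \emph{any} of the four outer neighbors is colored (this simultaneously protects properness and the partner's dynamic constraint), plus the dull rejections for its own two outer neighbors, and additionally $v_2$ is rejected whenever $v_1$ is colored (handling properness of the pair, including the case where both are presented in the same round). The resulting counts are $4+2+2=8$ for $v_1$ and $4+2+2+1=9$ for $v_2$. Note that $v_2$'s rejection budget is hit exactly, so your claim that the mutual dependence ``adds only a bounded few more'' rejections cannot be left unquantified; the proof lives precisely in writing down these rules and checking that the count stops at $9$.
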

\begin{proof}
Lemma~\ref{2^-vertex} implies that it suffices to consider a pair of vertices $v_1$ and $v_2$
with $d_G(v_i)=3$ for $i\in\set{1,2}$.
Let $y_i$ and $z_i$ be the other neighbors of $v_i$ (Figure~\ref{fig:adjacent3and3}).
Let $G'=(G\cup\set{y_1z_1,y_2z_2})-\set{v_1,v_2}$.
Observe that $G'$ is \extend{3} to $G$.

For $i\in\set{1,2}$, Painter rejects $v_i$ when $y_1,y_2,z_1$, or $z_2$ are colored.
Painter also rejects $v_i$ when $y_i$ or $z_i$ is dull and a neighbor of that vertex is being colored.
Lastly, Painter rejects $v_2$ when $v_1$ is colored.
Since $v_1$ is rejected at most $4+2+2$ times and $v_2$ is rejected at most 9 times, Painter wins.
\end{proof}

\begin{figure} \centering
\begin{tikzpicture}[scale=.75]
\path (0,0) coordinate (V1);
\path (3,0) coordinate (V2); 
\fill[fill=white,draw=gray,dashed] (1.5,0) circle [x radius=2.5cm, y radius=.6cm];
\draw (V1) -- (V2);
\foreach \i in {1,2} {\foreach \j in {1,2} {\path (V\i)++(180+60*\j:2cm) coordinate (Y\j\i);}}
\foreach \i in {1,2} {\draw (V\i) -- (Y1\i); \draw (V\i) -- (Y2\i); 
\draw[Eedge] (Y1\i) -- (Y2\i);
\fill[draw=black] (V\i) circle (2pt); 
\fill[draw=black] (Y1\i) circle (2pt); 
\fill[draw=black] (Y2\i) circle (2pt);}
\node[above] at (V1) {$v_1$}; 
\node[above] at (V2) {$v_2$}; 
\node[below] at (Y11) {$y_1$}; 
\node[below] at (Y12) {$y_2$}; 
\node[below] at (Y21) {$z_1$}; 
\node[below] at (Y22) {$z_2$}; 
\end{tikzpicture}
\caption{Configuration for Lemma~\ref{adjacent3and3}: $d_G(v_1)=d_G(v_2)=3$.}
\label{fig:adjacent3and3}
\end{figure}
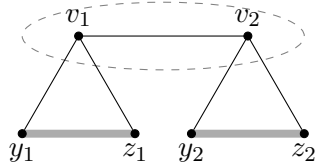

We say that a 3-face is \emph{expensive} when it contains a 3-vertex and
a 4-face is \emph{expensive} when it contains two 3-vertices.
Lemma~\ref{adjacent3and3} implies that
the 3-vertices on an expensive 4-face must be nonadjacent.

\begin{lemma}\label{3nbrsandexpfaces}
Let $v$ be a $d$-vertex that is contained in $e_3$ expensive $3$-faces
and $e_4$ expensive $4$-faces.
If the number of $3$-neighbors of $v$ is $k,$ $k\ge2,$ and $d+k-e_3-e_4<10$, then $v$ is reducible.
\end{lemma}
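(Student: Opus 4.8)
The plan is to delete $v$, compensate for the deletion by adding edges among its neighbors so that the $3$-dynamic constraints survive the reduction, and then show that Painter can afford to reject $v$ on every dangerous round without exhausting its $10$ tokens. First I would normalize the configuration. By Lemma~\ref{2^-vertex} we may assume $d\ge3$ and that every $3$-neighbor of $v$ has degree exactly $3$; by Lemma~\ref{adjacent3and3} and the remark following it, the $3$-neighbors of $v$ are pairwise nonadjacent and the two $3$-vertices of each expensive $4$-face are nonadjacent. Write $u_1,\dots,u_k$ for the $3$-neighbors of $v$, and let $a_i,b_i$ be the two neighbors of $u_i$ other than $v$.

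Next I would build $G'$. Following the pattern of Lemmas~\ref{2^-vertex} and~\ref{adjacent3and3}, set $G'=G-v$ and, for each $i$, add the edge $a_ib_i$ (ignoring it when $a_ib_i\in E(G)$ or when it would be a multiedge), together with any further edges among $N_G(v)$ needed to force three colors on $N_G(v)$. Because consecutive neighbors of $v$ and the vertices of each incident $3$- or $4$-face lie on a common face, each such edge can be drawn so that $G'$ still embeds on the torus. I would then check that $G'$ is \extend{3} to $G$: in any $3$-dynamic $10$-painting of $G'$, the set $N_G(v)$ already exhibits three colors, so $v$ has a proper choice that keeps it $3$-dynamic; and for each $u_i$ the edge $a_ib_i$ (or the degree-$2$ dynamic condition at $u_i$) forces $a_i\ne b_i$, so any proper color on $v$ avoiding $a_i,b_i$ makes $N_G(u_i)$ show three colors. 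Thus a winning $G'$-first strategy transfers to $G$ by Remark~\ref{rem:extend}.

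The core is the $G'$-first strategy and its rejection count. Painter rejects $v$ exactly as in the earlier lemmas: on any round in which a neighbor of $v$ is colored, and on any round in which some $3$-neighbor $u_i$ is dull and one of its neighbors is being colored. The neighbor-rounds number at most $d$. The dull-rounds are where the expensive faces pay off: each $u_i$ stops being dull once both $a_i$ and $b_i$ are colored, and the geometry forces coincidences among the remaining charges — on an expensive $3$-face the $3$-vertex shares the third triangle vertex with $N_G(v)$, and on an expensive $4$-face the two $3$-neighbors share their common opposite vertex, so in each such case one dull-charge falls on a round already counted. The goal of the count is to show that the neighbor-rounds and dull-rounds together charge $v$ at most $d+k-e_3-e_4$ times, which is $<10$ by hypothesis; then $v$ is never forced to die, and Painter wins by Remark~\ref{rem:extend}.

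The \emph{main obstacle} is precisely this count. One must show that the $3$-neighbors and the added edges together contribute at most $k-e_3-e_4$ charges beyond the $d$ neighbor-charges — that is, that each incident expensive face removes exactly one charge through the coincidence it forces, and that no configuration forces more. This calls for a careful case analysis of how the outside pairs $\set{a_i,b_i}$ overlap with one another and with $N_G(v)$; the hypothesis $k\ge2$ and the nonadjacency reductions from Lemmas~\ref{2^-vertex} and~\ref{adjacent3and3} are what keep this analysis finite and tractable. Verifying that all of the added edges can be realized simultaneously in a single toroidal embedding, and discarding the multiedges this may create, is a routine but necessary side check, as noted at the start of the section.
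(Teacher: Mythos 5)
Your construction differs from the paper's in a way that is fatal, not merely technical. You set $G'=G-v$, keeping the $3$-neighbors $u_1,\dots,u_k$ in $G'$ and adding the edges $a_ib_i$. Then in $G$ each $u_i$ still needs three colors on $N_G(u_i)=\{v,a_i,b_i\}$, so the color of $v$ must differ from the colors of \emph{both} $a_i$ and $b_i$; in the painting game this means Painter must reject $v$ on the round in which $a_i$ is colored and also on the round in which $b_i$ is colored. Your dull-rule does exactly this ($u_i$ remains dull until both $a_i$ and $b_i$ are colored, since the edge $a_ib_i$ keeps their colors distinct), but that is \emph{two} charges per $u_i$, not one. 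So the rounds on which $v$ must be rejected are the rounds coloring $N_G(v)\cup\bigcup_i\{a_i,b_i\}$, and this set has size $d+2k-e_3-e_4$ in the worst case: the coincidences you point to (a shared triangle vertex for an expensive $3$-face, a shared opposite vertex for an expensive $4$-face) are real, but each is worth exactly one saving and they are the only ones. The hypothesis only gives $d+k-e_3-e_4\le 9$. Concretely, take $d=4$, $k=4$, $e_3=e_4=0$ (a $4$-vertex whose four neighbors are $3$-vertices, with no expensive faces); the hypothesis holds since $8<10$, yet your Painter must dodge $4+8=12$ rounds with only $10$ tokens. No case analysis of overlaps can repair this, because the rejections are forced by the requirement that $v,a_i,b_i$ receive pairwise distinct colors: any strategy built on $G'=G-v$ inherits the same lower bound.

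The paper escapes this by deleting the $3$-neighbors along with $v$: it takes $S=\{v,x_1,\dots,x_k\}$ and $G'=(G\cup\{y_iz_i:i\in[k]\})-S$, so the avoidance burden is \emph{distributed} over $k+1$ token budgets rather than concentrated on $v$. Each $x_i$ is rejected when $v$, $y_i$, or $z_i$ is colored, on dull rounds for $y_i,z_i$, and (for $x_2$, and $x_3$ when $k\ge3$) when earlier $x_j$'s are colored --- at most $9$ times each --- while $v$ is rejected only when a vertex of $(N_G(v)\setminus\{x_1,\dots,x_k\})\cup\{y_i,z_i:i\in[k]\}$ is colored, a set of size at most $(d-k)+2k-e_3-e_4=d+k-e_3-e_4\le 9$. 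The $k$ you are missing is saved precisely because $v$ no longer has to avoid the rounds on which its $3$-neighbors are colored; instead \emph{they} avoid $v$'s round, out of their own tokens. Your auxiliary device of adding ``further edges among $N_G(v)$'' to force three colors there would also backfire: every such edge is absent from $G$, so its endpoints may rely on a fake color for their dynamic requirement, generating still more dull-charges against $v$. The paper instead secures $v$'s own $3$-dynamic condition from the rejections that force distinct colors on $x_1,x_2$ (and $x_3$ when $k\ge3$).
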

\begin{proof}
Since $k\ge2$, Lemmas~\ref{2^-vertex} and~\ref{adjacent3and3} imply that $d\ge4$.
Let $\VEC x1k$ be the 3-neighbors of $v$ in clockwise order,
and let $v,y_i,z_i$ be the neighbors in clockwise order of $x_i$ for $i\in[k]$ (Figure~\ref{fig:3nbrsandexpfaces}).
Let $S=\set{v,\VEC x1k}$ and $E'=\set{y_iz_i:i\in[k]}$,
and let $G'=(G\cup E')-S$.
%Observe that $k(w)\ge2$ and $s(w)\ge1$ for all $w\in S-v$.
%Also, $s(v)=k$ and for $w\in V(G')$ we have $s(w)\ge|\set{e\in E':w\in e}|$, so $G'$ is \extend{3} to $G$.
Observe that $G'$ is \extend{3} to $G$.

Painter rejects $x_i$ when $v$, $y_i$, or $z_i$ are colored,
or when $y_i$ or $z_i$ is dull and a neighbor of that vertex is being colored.
Painter also rejects $x_2$ when $x_1$ is colored,
and when $k\ge3$ Painter rejects $x_3$ when $x_2$ or $x_1$ are colored.
Thus each $x_i$ is rejected at most 9 times.
Painter rejects $v$ whenever a vertex of $N_G(v)-\set{x_1,...,x_k}$ is colored.
Painter also rejects $v$ when a vertex in $\set{y_i,z_i:i\in[k]}$ is colored.
Thus Painter rejects $v$ at most $|(N_G(v)-\set{x_1,...,x_k})\cup\set{y_i,z_i:i\in[k]}|$ times. 
When $x_i$ is on an expensive 3-face, then one of $y_i$ or $z_i$ is in $N_G(v)-\set{x_1,...,x_k}$ and is already avoided.
When $x_i$ and $x_{i+1}$ are on an expensive 4-face, we have that $z_i=y_{i+1}$.
Thus $|(N_G(v)-\set{x_1,...,x_k})\cup\set{y_i,z_i:i\in[k]}|\le d-k+2k-e_3-e_4$. If $d-2k-e_3-e_4<10$, then Painter wins.
\end{proof}

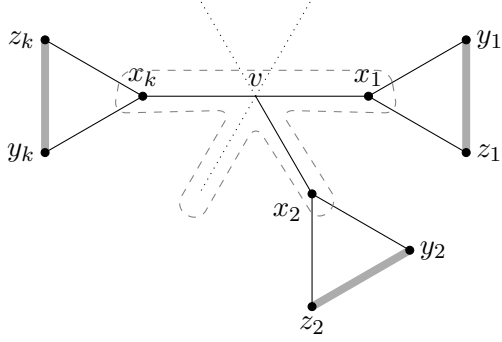
\begin{figure} \centering
\begin{tikzpicture}[scale=.75]
\path (0,0) coordinate (V);
\foreach \j in {1,...,6} {\path (V)++(60-60*\j:2cm) coordinate (X\j);}
\foreach \j in {1,2} {\path (X1)++(90-60*\j:2cm) coordinate (Y\j1);}
\foreach \j in {1,2} {\path (X2)++(30-60*\j:2cm) coordinate (Y\j2);}
\foreach \j in {1,2} {\path (X4)++(270-60*\j:2cm) coordinate (Y\j4);}
\foreach \j in {1,2,4} {
  \draw[Eedge] (Y1\j) -- (Y2\j);
  \foreach \i in {1,2} {
    \draw (X\j) -- (Y\i\j);
    \fill[draw=black] (Y\i\j) circle (2pt);}
  \draw (V) -- (X\j);
  \fill[draw=black] (X\j) circle (2pt); }
\foreach \j in {3,5,6} {\draw[dotted] (V) -- (X\j);}
\node[above] at (V) {$v$}; 
\node[above] at (X1) {$x_1$}; 
\node[below left] at (X2) {$x_2$}; 
\node[above] at (X4) {$x_k$}; 
\node[right] at (Y11) {$y_1$}; 
\node[right] at (Y12) {$y_2$}; 
\node[left] at (Y14) {$y_k$}; 
\node[right] at (Y21) {$z_1$}; 
\node[below] at (Y22) {$z_2$}; 
\node[left] at (Y24) {$z_k$};
\foreach \j in {1,...,3} {\path (V)++(30-60*\j:.5cm) coordinate (X\j5);}
\foreach \j in {1,2} {\path (X1)++(130-80*\j:.6cm) coordinate (X\j1);}
\foreach \j in {1,2} {\path (X2)++(40-60*\j:.5cm) coordinate (X\j2);}
\foreach \j in {1,2} {\path (X3)++(330-60*\j:.5cm) coordinate (X\j3);}
\foreach \j in {1,2} {\path (X4)++(290-80*\j:.6cm) coordinate (X\j4);}
\draw[rounded corners=5pt,gray,dashed] (X11)--(X21)--(X15)--(X12)--(X22)--(X25)--(X13)--(X23)--(X35)--(X14)--(X24)--cycle;
\end{tikzpicture}
\caption{Configuration for Lemma~\ref{3nbrsandexpfaces}: $d_G(v)=d$; $d_G(x_i)=3$ for $i\in[k]$.}
\label{fig:3nbrsandexpfaces}
\end{figure}

\begin{lemma}\label{adjacent3and4}
A $4^-$-vertex with a $3^-$-neighbor is reducible.
\end{lemma}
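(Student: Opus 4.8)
The plan is to generalize the two-vertex deletion of Lemma~\ref{adjacent3and3} by deleting both endpoints of the offending edge. First I would clear away the easy cases: if either vertex has degree at most $2$ the configuration is reducible by Lemma~\ref{2^-vertex}, and if both endpoints have degree $3$ it is a pair of adjacent $3^-$-vertices and Lemma~\ref{adjacent3and3} applies. What remains is a $4$-vertex $v$ with neighbors $u,x_1,x_2,x_3$ that is adjacent to a $3$-vertex $u$ with neighbors $v,y,z$.

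For this case I would take $S=\set{u,v}$ and add a single edge on each side, $E'=\set{yz,x_1x_2}$, so that $G'=(G\cup E')-S$. Because $u$ and $v$ are adjacent, deleting both merges the incident faces into one face carrying $y,z,x_1,x_2,x_3$ on its boundary, so (ignoring multiedges) the edges $yz$ and $x_1x_2$ can be drawn in that face and $G'$ embeds on the torus. To check that $G'$ is \extend{3} to $G$, observe that any $3$-dynamic coloring of $G'$ has $y\ne z$ and $x_1\ne x_2$, and that Painter can color the deleted vertices so that $v$ avoids the colors on $y$ and $z$ and $u$ avoids the colors on $x_1$ and $x_2$, putting a third color into each of $N_G(u)=\set{v,y,z}$ and $N_G(v)=\set{u,x_1,x_2,x_3}$.

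Painter plays a $G'$-first strategy that colors $v$ in preference to $u$. I would reject $v$ when $x_1,x_2,x_3$ are colored (three times, for properness), when $y$ or $z$ is colored (twice, to keep $v$ off their colors for $u$'s dynamic constraint), and when $x_1$ or $x_2$ is dull and a neighbor of it is being colored (at most four times, at most two per neighbor); this is at most $9$. I would reject $u$ when $y,z$ are colored (twice), when $x_1,x_2$ are colored (twice, for $v$'s dynamic constraint), when $y$ or $z$ is dull and a neighbor is being colored (at most four times), and when $v$ is colored (once, for the adjacency $uv$); this is also at most $9$. Since each deleted vertex is rejected at most $9$ times, Painter wins by Remark~\ref{rem:extend}.

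The step I expect to be delicate is that both counts are tight, which dictates every design choice. Because $v$ is a $4$-vertex it already spends three rejections on properness, leaving exactly $2+4$ for the cross-constraint and the dull compensation and none for the adjacency; this forces me to add only one edge on the $v$-side (so that $x_3$ is incident to no added edge and contributes no dull rejections) and to charge the edge $uv$ to $u$ rather than to $v$. As in Lemma~\ref{adjacent3and3}, charging the adjacency to one side only is legitimate: in the round in which $v$ receives its color, rejecting $u$ prevents $u$ from matching it, and conversely, if $u$ ever receives a color then $v$ is not colored in that round, so $v$ cannot match $u$ either. Finally I would note that coincidences among $\set{y,z}$ and $\set{x_1,x_2,x_3}$, such as a common neighbor of $u$ and $v$, only delete edges from $E'$ and remove constraints, so they cannot raise either rejection count.
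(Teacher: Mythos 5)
Your construction, rejection rules, and counts are the same as the paper's; the gap is in your opening reduction. You clear away only the cases handled by Lemmas~\ref{2^-vertex} and~\ref{adjacent3and3}, which leaves open the possibility that the $4$-vertex $v$ has a second or third $3$-neighbor among $x_1,x_2,x_3$. The paper additionally invokes Lemma~\ref{3nbrsandexpfaces}: for a $4$-vertex with $k\ge2$ $3$-neighbors, $d+k-e_3-e_4\le 8<10$, so that configuration is already reducible, and one may therefore assume $v$ has \emph{exactly one} $3$-neighbor, namely $u$, making $x_1,x_2,x_3$ all $4^+$-vertices. Your proof never makes this reduction, and your strategy genuinely fails without it.

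Concretely, suppose $x_3$ is a $3$-vertex with $N_G(x_3)=\set{v,c,d}$. In $G'$ the vertex $x_3$ has degree $2$, so a $3$-dynamic coloring of $G'$ guarantees only that $c$ and $d$ get distinct colors; in $G$ we need three colors on $\set{v,c,d}$, so $v$ must also avoid the colors of $c$ and $d$. Nothing in your rules prevents Painter from coloring $v$ in the same round as $c$: your rejections for $v$ involve only $x_1,x_2,x_3,y,z$ and dullness of $x_1,x_2$, and $x_3$ is precisely the vertex you arranged to be incident to no added edge and hence to generate no dull rejections. Lister can thus force $v$ and $c$ to share a color, ruining the $3$-dynamic condition at $x_3$. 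Patching this by adding dull rejections for $x_3$ costs two more rejections and pushes $v$'s count to $11>9$---and, as you observe yourself, the budget has no slack---while relabeling so that the added edge covers a $3$-vertex among the $x_i$ handles at most one such neighbor, not two or three. The correct repair is exactly the paper's first sentence: cite Lemma~\ref{3nbrsandexpfaces} up front so that every neighbor of $v$ other than $u$ is a $4^+$-vertex, after which your argument coincides with the paper's and goes through.
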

\begin{proof}
By Lemmas~\ref{adjacent3and3} and \ref{3nbrsandexpfaces}, 
it suffices to consider a 4-vertex $v_1$ with exactly one 3-neighbor $v_2$. 
Let $y_1$ and $z_1$ be neighbors of $v_1$,
and let $y_2$ and $z_2$ be the neighbors of $v_2$ other than $v_1$ (Figure~\ref{fig:adjacent3and4}).
Because $y_1$ and $z_1$ are on the same face, we may add the edge $y_1z_1$ to $G'$.
Let $G'=(G\cup\set{y_1z_1,y_2z_2})-\set{v_1,v_2}$.
%Since $k(v_i)+s(v_i)\ge3$ and $s(w)\ge|\set{e\in E':w\in e}|$ for all $w\in V(G)-S$, 
%we have that $G'$ is \extend{3} to $G$.
Observe that $G'$ is \extend{3} to $G$.

For $i\in\set{1,2}$, Painter rejects $v_i$ when $y_1,z_1,y_2$, or $z_2$ is colored.
Painter also rejects $v_i$ when $y_i$ or $z_i$ is dull and a neighbor of that vertex is being colored.
Additionally, Painter rejects $v_1$ when its neighbor other than $v_2,y_1$, or $z_1$ is colored.
Lastly, Painter rejects $v_2$ when $v_1$ is colored.
For $i\in\set{1,2}$, Painter rejects $v_i$ at most 9 times.
\end{proof}

\begin{figure} \centering
\begin{tikzpicture}[scale=.75]
\path (0,0) coordinate (V1);
\path (3,0) coordinate (V2); 
\fill[fill=white,draw=gray,dashed] (1.5,0) circle [x radius=2.5cm, y radius=.6cm];
%\foreach \i in {1,2} {\fill[fill=white,draw=black,dashed] (V\i) circle (.5cm);}
\draw (V1) -- (V2);
\foreach \i in {1,2} {\foreach \j in {1,2} {\path (V\i)++(180+60*\j:2cm) coordinate (Y\j\i);}}
\foreach \i in {1,2} {\draw (V\i) -- (Y1\i); \draw (V\i) -- (Y2\i); 
\draw[Eedge] (Y1\i) -- (Y2\i);
\fill[draw=black] (V\i) circle (2pt); 
\fill[draw=black] (Y1\i) circle (2pt); 
\fill[draw=black] (Y2\i) circle (2pt);}
\node[above] at (V1) {$v_1$}; 
\node[above] at (V2) {$v_2$}; 
\node[below] at (Y11) {$y_1$}; 
\node[below] at (Y12) {$y_2$}; 
\node[below] at (Y21) {$z_1$}; 
\node[below] at (Y22) {$z_2$}; 
\end{tikzpicture}
\caption{Configuration for Lemma~\ref{adjacent3and4}: $d_G(v_1)=4$; $d_G(v_2)=3$.}
\label{fig:adjacent3and4}
\end{figure}
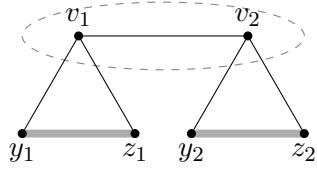

\begin{lemma}\label{4vertex3-cycle}
A (possibly non-facial) 3-cycle containing at most one $5^+$-vertex is reducible.
%If a (possibly non-facial) 3-cycle contains a $4^-$-vertex, 
%then the other two vertices of the 3-cycle are $5^+$-vertices.
\end{lemma}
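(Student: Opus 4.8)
The plan is to peel off the easy degree cases with the earlier lemmas and then delete the low-degree vertices of the triangle, following the template of Lemmas~\ref{adjacent3and3} and~\ref{adjacent3and4}. First I would record the reduction. Since the $3$-cycle has at most one $5^+$-vertex, at least two of its three vertices are $4^-$-vertices, and these two are adjacent; call them $v_1$ and $v_2$. By Lemma~\ref{2^-vertex} neither is a $2^-$-vertex, by Lemma~\ref{adjacent3and3} they are not both $3$-vertices, and by Lemma~\ref{adjacent3and4} they cannot have distinct degrees in $\set{3,4}$, so $d_G(v_1)=d_G(v_2)=4$. The same three lemmas force the third vertex $v_3$ to have degree at least $4$. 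Thus it suffices to treat a triangle $v_1v_2v_3$ with $d_G(v_1)=d_G(v_2)=4$ and $d_G(v_3)\ge4$.

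Next I would build $G'$. Write the neighbors of $v_i$ (for $i\in\set{1,2}$) as $v_{3-i},v_3,a_i,b_i$, and set $E'=\set{a_1b_1,a_2b_2}$; each pair $a_ib_i$ lies on a face vacated by $v_i$, so the induced toroidal embedding survives when these edges are added. Let $G'=(G\cup E')-\set{v_1,v_2}$. To see $G'$ is \extend{3} to $G$, color $V(G')$ by the $G'$-first strategy and then color $v_1$ and $v_2$ last. The edges of $E'$ force $a_i\ne b_i$, and the triangle forces $v_1,v_2,v_3$ to be pairwise distinct, so the neighborhood of each $v_i$ can be driven to three colors, with the color of the other deleted vertex serving as the third color exactly as for the pair in Lemma~\ref{adjacent3and4}. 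When $d_G(v_3)\ge5$ the vertex $v_3$ keeps at least three neighbors in $G'$, so its $3$-dynamic constraint holds automatically; when $d_G(v_3)=4$ I would instead delete all three triangle vertices, adding the edge between the two surviving neighbors of each $v_i$ to $E'$, so that no kept vertex needs repair.

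For the all-$4$ case (delete $v_1,v_2,v_3$ in this order) the counting is clean and is the part I am most confident about. Each deleted vertex is rejected twice for properness against its two private neighbors and is subject to at most four dull rejections across the endpoints of its added edge; the ordering rejections contribute $0,1,2$ to $v_1,v_2,v_3$. This gives base totals $6,7,8$. The three dynamic constraints each cost two further rejections charged to the other two deleted vertices, six in all, and these can be balanced so that $v_1,v_2,v_3$ absorb $3,2,1$ of them respectively (use $a_1\notin\set{v_2,v_3}$ and $a_3\notin\set{v_1,v_2}$ to charge symmetrically, and $v_1\notin\set{a_2,b_2}$ to charge $v_1$ twice). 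Every deleted vertex then reaches exactly $9$, and Painter wins by Remark~\ref{rem:extend}.

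The main obstacle is the case $d_G(v_3)\ge5$, where only $v_1,v_2$ are deleted and $v_3$ cannot absorb any dynamic-charge because it is colored by the $G'$-first strategy rather than last. Here the second-colored vertex $v_2$ inherits three properness rejections (for $v_3,a_2,b_2$), one ordering rejection (against $v_1$), two rejections that supply $v_1$ its third color (against $a_1,b_1$), and up to four dull rejections, which naively totals ten. Bringing this to $9$ is the crux: I would use the shared neighbor $v_3$ and the edge $v_1v_2$ to make rejections coincide—most importantly, when a dull private neighbor of $v_2$ has $v_3$ as its only relevant uncolored neighbor, its dull rejection falls on the very round that $v_3$ is colored, merging with the properness rejection for $v_3$—and I would split into subcases according to whether the private neighbors coincide or are already adjacent (the degeneracy flagged for Lemma~\ref{adjacent3and3}), since in each such case the number of distinct vertices $v_2$ must avoid strictly drops. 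Showing that one of these savings is always available, uniformly over the facial and non-facial embeddings, is the heart of the proof.
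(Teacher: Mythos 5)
Your reduction to a triangle $v_1v_2v_3$ with $d_G(v_1)=d_G(v_2)=4$ and $d_G(v_3)\ge4$ matches the paper's first step, but the proof has a genuine gap: the case $d_G(v_3)\ge5$, which is the main content of the lemma, is never completed, and by your own accounting it cannot be completed with your choice of $G'$. Adding $a_1b_1$ and $a_2b_2$ forces the third color of $N_G(v_1)$ to be supplied by $v_2$ avoiding both colors on $\set{a_1,b_1}$ (the vertex $v_3$ is colored by the $G'$-first strategy and cannot be steered), and symmetrically for $N_G(v_2)$; that $+2$ is exactly what drives $v_2$ to ten rejections. The savings you gesture at are not available in general: a dull rejection for $a_2$ or $b_2$ need not fall in the round in which $v_3$ is colored, and nothing forces degeneracies such as $a_1=a_2$ or $a_ib_i\in E(G)$, so ``one of these savings is always available'' is precisely the unproven claim. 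The all-degree-$4$ branch is also weaker than you suggest: your base totals $6,7,8$ plus six dynamic-constraint rejections sum to $27=3\cdot 9$, so you need the exact split $3,2,1$, which requires dividing the two rejections of a single constraint between two deleted vertices (both avoiding the color of the same vertex $a_i$); your parenthetical charging rule does not establish this, and you never address whether the three chords $a_ib_i$ can be added simultaneously to the embedding when the $3$-cycle is non-facial.

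The idea you are missing is to choose the added edges so that \emph{both} endpoints are $G$-neighbors of the deleted vertex. The paper deletes only $v_1,v_2$ and sets $G'=(G\cup\set{y_1z,y_2z})-\set{v_1,v_2}$, where $z=v_3$ is kept and $y_i$ is a neighbor of $v_i$ chosen so that $y_i,v_i,z$ are consecutive on a face. Then $y_i,z\in N_G(v_i)$ already receive distinct colors in the $G'$-coloring, so the third color for $N_G(v_i)$ is supplied by $v_{3-i}$, which is made to avoid the colors of $y_1,y_2,z$---only one rejection (for $y_{3-i}$) beyond what properness already demands, instead of your two. This single saving per vertex keeps each of $v_1,v_2$ within budget (the paper gets at most $8$ rejections each), and the one construction works uniformly for every degree of $z$, eliminating your case split and its attendant embedding and balancing issues entirely.
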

\begin{proof}
If a 3-cycle contains a 3-vertex,
then Lemma~\ref{adjacent3and4} implies the result.
Thus we may assume that the 3-cycle contains a 4-vertex $v_1$.
Lemma~\ref{adjacent3and4}
implies that the other vertices on the 3-cycle are $4^+$-vertices,
so it suffices to consider the case when there is another 4-vertex $v_2$ on the 3-cycle.
Let $z$ be the common neighbor of $v_1$ and $v_2$ on the 3-cycle.
Let $y_i$ be a neighbor of $v_i$ such that $y_i, v_i, z$ are consecutive vertices on a face and $y_i\neq v_{3-i}$, (Figure~\ref{fig:4vertex3-cycle}).
Let $G'=(G\cup\set{y_1z,y_2z})-\set{v_1,v_2}$.
Observe that $G'$ is \extend{3} to $G$.

For $i\in\set{1,2}$, Painter rejects $v_i$ when $y_1,y_2$, or $z$ is colored,
and when $y_i$ is dull and one of its neighbors is being colored.
Also, Painter rejects $v_2$ when $v_1$ is colored.
For $i\in\set{1,2}$, Painter rejects each $v_i$ at most 8 times.
\end{proof}

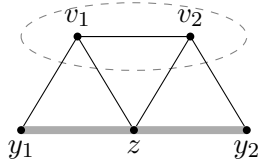
\begin{figure}
\begin{center}
\begin{tikzpicture}[rotate = 180, scale = 1.5]
\path (.5,.83) coordinate (v); 
\fill[fill=white,draw=gray,dashed] (.5,0) circle [x radius=1.cm, y radius=.3cm];
\node[below] at (v) {$z$}; 
\path (-.5,.83) coordinate (a); 
\node[below] at (a) {$y_2$};
\path (1.5,.83) coordinate (z); 
\node[below] at (z) {$y_1$};
\path (0,0) coordinate (y); 
\node[above] at (y) {$v_2$};
\path (1,0) coordinate (x); 
\node[above] at (x) {$v_1$};
\draw (a) -- (v) -- (z) -- (x) -- (y) -- cycle;
\draw (v) -- (y) -- (x) -- cycle; 
\draw[Eedge] (a) -- (z); 
\fill (v) circle (1pt); 
\fill (a) circle (1pt); 
\fill (z) circle (1pt); 
\fill (y) circle (1pt); 
\fill (x) circle (1pt); 
\end{tikzpicture}
\end{center}
\caption{Configuration for Lemma~\ref{4vertex3-cycle}: $d_G(v_1)=d_G(v_2)=4$; $z$ is a common neighbor of $v_1$ and $v_2$.}
\label{fig:4vertex3-cycle}
\end{figure}

\begin{lemma}\label{adjacenttriangle}
Let $uv$ be the common edge of two adjacent 3-faces with $d_G(v) \le 5$. If the neighbors of $v$ off these 3-faces are $4^+$-vertices, then $\{v\}$ is reducible. 
\end{lemma}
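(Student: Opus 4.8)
The plan is to delete $v$ alone and add a single chord, in the same spirit as Lemmas~\ref{2^-vertex}--\ref{adjacent3and4}. Denote the two $3$-faces sharing $uv$ by $uvw_1$ and $uvw_2$, so that $\set{u,w_1,w_2}\subseteq N_G(v)$; since $d_G(v)\le5$, the neighbors of $v$ off these two faces (all of degree at least $4$ by hypothesis) number at most two. In the rotation at $v$ the vertices $w_1,u,w_2$ are consecutive and are joined by the triangle edges $uw_1$ and $uw_2$, so after deleting $v$ they lie on a common face and we may add $w_1w_2$. Set $G'=(G\cup\set{w_1w_2})-v$.

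First I would observe that $G'$ is \extend{3} to $G$. The relevant point is that $uw_1,uw_2,w_1w_2\in E(G')$, so every proper coloring of $G'$ assigns three distinct colors to $u,w_1,w_2$; since $\set{u,w_1,w_2}\subseteq N_G(v)$ and $d_G(v)\ge3$, this alone allows $v$ to be colored so that $N_G(v)$ carries $\min\set{3,d_G(v)}=3$ colors.

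Painter then plays the $G'$-first strategy and rejects $v$ in just two situations: (a) whenever a neighbor of $v$ is being colored, and (b) whenever $w_1$ or $w_2$ is dull and a neighbor of it is being colored. Rule~(a) keeps $v$ proper and, since each neighbor is colored exactly once, contributes at most $d_G(v)\le5$ rejections. Rule~(b) is needed because $w_1w_2\notin E(G)$ in general, so the color the $G'$-strategy places on $w_2$ (resp.\ $w_1$) may fail to count toward $w_1$ (resp.\ $w_2$) being $3$-dynamic in $G$; since (as observed at the start of the section) a vertex is rejected on account of any one dull neighbor at most twice, this costs at most $2$ rejections per vertex, hence at most $4$ in all.

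The heart of the argument is to check that no other neighbor of $v$ forces a dull rejection, so that the total stays at $5+4=9$. An off-face neighbor is a $4^+$-vertex, so deleting $v$ leaves it with at least three neighbors in $G'$, and the $G'$-first strategy already makes it $3$-dynamic in $G$ with no help from $v$. The vertex $u$ is the only remaining case: if $d_G(u)\ge4$ it behaves exactly like an off-face neighbor, while if $d_G(u)=3$ then $N_G(u)=\set{v,w_1,w_2}$, and the edge $w_1w_2$ (forcing $w_1\ne w_2$) together with rule~(a) (forcing $v\ne w_1$ and $v\ne w_2$) already supplies three colors on $N_G(u)$ with no dull rejection. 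Thus $v$ is rejected at most $9$ times and Painter wins by Remark~\ref{rem:extend}. I expect this bookkeeping to be the main obstacle: one must see that the two hypotheses, $d_G(v)\le5$ and $4^+$ off-face neighbors, are precisely what confines the dull rejections to $w_1$ and $w_2$ and keeps the count at the $9$ permitted by $10$-paintability.
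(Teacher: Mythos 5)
Your proposal is correct and matches the paper's proof essentially verbatim: the paper also takes $G'=(G\cup\set{yz})-v$ (your $w_1w_2$ is its $yz$), has Painter reject $v$ whenever a vertex of $N_G(v)$ is colored or whenever $y$ or $z$ is dull and has a neighbor being colored, and wins with the same count of at most $5+2+2=9$ rejections. Your extra bookkeeping (why the triangle in $G'$ gives extendability, and why $u$ and the off-face $4^+$-neighbors need no dull rejections) is exactly what the paper compresses into ``Observe that $G'$ is \extend{3} to $G$.''
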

\begin{proof}
Suppose that $v$ is a $5^-$-vertex.
Let $uvy$ and $uvz$ be 3-faces (Figure~\ref{fig:adjacenttriangle}).
Note that if $v$ is a $4^-$-vertex, then Lemma~\ref{adjacent3and4} implies any other neighbor of $v$ is a $4^+$-vertex and the additional condition is unnecessary.
Let $G'=(G\cup{yz})-v$.
Observe that $G'$ is \extend{3} to $G$.

Painter rejects $v$ when any vertex in $N_G(v)$ is colored.
Painter also rejects $v$ when $y$ or $z$ is dull and a neighbor of that vertex is being colored.
Since $v$ is rejected at most 9 times, Painter wins.
\end{proof}

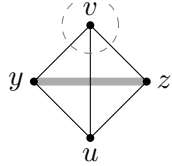
\begin{figure}
\begin{tikzpicture}[scale=.75]
\path (0,-1) coordinate (u); 
\node[below] at (u) {$u$}; 
\path (0,1) coordinate (v); 
\node[above] at (v) {$v$}; 
\path (1,0) coordinate (z); 
\node[right] at (z) {$z$};
\path (-1,0) coordinate (y); 
\node[left] at (y) {$y$};
\draw[Eedge] (y) -- (z); 
\draw (u) -- (y) -- (v) -- (z) -- cycle;
\draw (u) -- (v);
\draw[color = gray, dashed] (v) ellipse (.5cm and .5cm); 
\fill (y) circle (2pt); 
\fill (u) circle (2pt); 
\fill (v) circle (2pt); 
\fill (z) circle (2pt); 
\end{tikzpicture}
\caption{Configuration for Lemma~\ref{adjacenttriangle}: $uv$ is the shared edge; $d_G(v)\le4$.}
\label{fig:adjacenttriangle}
\end{figure}
%
%\begin{lemma}\label{5adjacenttriangle}
%A 5-vertex with all $4^+$-neighbors on two 3-faces sharing an edge is reducible.
%\end{lemma}
%\begin{proof}
%Suppose that $v$ is a $5$-vertex.
%Let $uvy$ and $uvz$ be 3-faces (Figure~\ref{fig:adjacenttriangle}).
%Let $G'=(G\cup{yz})-v$.
%Observe that $G'$ is \extend{3} to $G$.
%
%Painter rejects $v$ when any vertex in $N_G(v)$ is colored.
%Painter also rejects $v$ when $y$ or $z$ is dull and a neighbor of that vertex is being colored.
%Since $v$ is rejected at most 9 times, Painter wins.
%\end{proof}

\begin{lemma}\label{triangleand4-vtx}
Let $v$ be a $7^-$-vertex with a $4^-$-neighbor $x$. If $v$ has no 3-neighbors (aside from possibly $x$) and is on more than one 3-face, then $\{v,x\}$ is reducible.
\end{lemma}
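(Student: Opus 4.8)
The plan is to reuse the $G'$-first template of Lemmas~\ref{adjacenttriangle} and~\ref{adjacent3and4}, deleting $S=\set{v,x}$ and repairing the resulting faces with a small edge set $E'$. First I would cut the problem down to its extremal shape with the earlier lemmas. Lemma~\ref{adjacent3and4} lets me assume that every neighbor of $v$ other than $x$, and every neighbor of $x$ other than $v$, is a $4^+$-vertex; Lemma~\ref{adjacenttriangle} already disposes of $d_G(v)\le 5$ when the two $3$-faces share an edge; and taking $d_G(v)$ or $d_G(x)$ below $7$ or $4$, or placing $v$ on more than two $3$-faces, only lowers the rejection counts computed below. So it suffices to treat $d_G(v)=7$, $d_G(x)=4$, and $v$ on exactly two $3$-faces.

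Next I would set $G'=(G\cup E')-\set{v,x}$. The two $3$-faces at $v$ already contribute the edges between the consecutive neighbors of $v$ along them, so $E'$ only needs the few edges that keep a deleted vertex's neighborhood from losing a reachable color: an edge joining the two neighbors of $x$ off $vx$, and, when a common neighbor $p$ of $v$ and $x$ would otherwise drop to $G'$-degree $2$, the single edge closing the quadrilateral left behind by deleting both $v$ and $x$. Since $v$ and $x$ lie on a common face in each subcase, every such edge can be drawn in the induced embedding, so $G'$ embeds on the torus. Re-coloring $v$ and $x$ at the end, together with these edges, returns to each former neighbor its $\min\set{3,d}$ distinct colors, so $G'$ is \extend{3} to $G$, and Remark~\ref{rem:extend} reduces the problem to bounding rejections.

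For the strategy I would have Painter play $G'$-first, color $x$ before $v$, reject $v$ on every round where one of its six $G'$-neighbors or $x$ is colored, reject $x$ on every round where one of its three $G'$-neighbors is colored, and in addition reject either deleted vertex (at most twice per neighbor) when a neighbor carrying an edge of $E'$ is dull and one of that neighbor's neighbors is colored, exactly as set up before Lemma~\ref{2^-vertex}. Because $x$ has only three $G'$-neighbors, it stays at most $9$ rejections even after its few dull-rejections. The whole lemma hinges on the count for $v$: its neighbor-and-$x$ rejections total $6+1$, and a dull-rejection of $v$ can occur only for a neighbor of $v$ incident to an edge of $E'$, so the requirement $6+1+2k\le 9$ forces $k\le 1$, i.e.\ at most one neighbor of $v$ may receive an added edge. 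This is the main obstacle, and it is exactly where the two-$3$-face hypothesis is spent: the triangle edges are already present, so no edge is added along them, and they constrain how $x$ can meet the triangles so that at most one common neighbor of $v$ and $x$ can be pushed to $G'$-degree $2$ and thus call for the one permissible edge of $E'$ incident to $N_G(v)$. I expect to close with a brief case check on the location of $x$ relative to the two $3$-faces, confirming in each case that no second neighbor of $v$ needs an added edge and that $x$ stays within its budget, whence Painter wins.
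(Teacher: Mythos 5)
Your proposal shares the paper's general framework (delete $\{v,x\}$, add a small edge set, play a $G'$-first strategy, count rejections), but it is missing the two mechanisms that make the proof work, and it spends the ``more than one 3-face'' hypothesis on the wrong thing. The paper's first and crucial step is to reduce to a 3-face on $v$ that \emph{avoids} $x$: if every 3-face at $v$ used the edge $vx$, then two 3-faces would share $vx$, and Lemma~\ref{adjacenttriangle} applied at the $4^-$-vertex $x$ (whose other neighbors are $4^+$ by Lemma~\ref{adjacent3and4}) makes that configuration reducible; hence some 3-face $vab$ has $a,b\neq x$. That is where the hypothesis is actually used. This face matters because it is the only thing guaranteeing $N_G(v)$ its colors: the edge $ab\in E(G)$ forces two distinct colors on $N_G(v)$, and the third is forced by rejecting $x$ whenever $v$ is dull and a neighbor of $v$ is being colored --- a dull rejection keyed to the \emph{deleted} vertex $v$. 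Your strategy has no analogue of either piece. Two 3-faces at $v$ give two edges inside $N_G(v)$, but those only force two distinct colors in total (the two triangle edges can repeat the same pair of round-colors), your dull rejections apply only to endpoints of edges of $E'$, and coloring $x$ ``before $v$'' does not help, since $x$ may be colored in the same round as a vertex already contributing a color to $N_G(v)$. Symmetrically, your rejection rule for $v$ (its $G'$-neighbors and $x$ only) never forces $v$'s color to differ from $y$ and $z$, so $N_G(x)$ can finish with only two colors; the paper rejects $v$ when $y$ or $z$ is colored precisely for this, and that fits the budget $|N_G(v)\cup\{y,z\}|\le 7+2=9$ only because \emph{no} dull rejections are charged to $v$ --- once you add the needed $y,z$ rejections to your count $6+1+2k\le 9$, you are forced to $k=0$, not $k\le 1$.

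Your second use of the hypothesis --- that the two 3-faces limit how many common neighbors of $v$ and $x$ fall to $G'$-degree $2$ --- is not correct: triangles through the edge $vx$ need not be faces, so a facial hypothesis says nothing about them. What actually excludes low-degree common neighbors is Lemma~\ref{4vertex3-cycle} (a possibly non-facial 3-cycle $vxp$ with $x$ and $p$ both $4^-$-vertices has at most one $5^+$-vertex, hence is reducible), which you never invoke; with it, every common neighbor of $v$ and $x$ is a $5^+$-vertex and your ``quadrilateral-closing'' edges are never needed at all. There are smaller defects too --- with $d_G(x)=4$ the phrase ``the two neighbors of $x$ off $vx$'' does not determine $E'$, and those neighbors must be chosen cofacial with $x$ for the embedding to survive --- but the fatal gaps are the two dynamic-coloring failures above, which the deferred ``case check on the location of $x$'' cannot repair, because they occur in every case.
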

\begin{proof}
We show that there cannot be a 3-face containing $v$ but not $x$.
After showing this, it follows that any 3-face containing $v$ must use the edge $xv$.
Lemma~\ref{adjacenttriangle} implies there is at most one such face.
Let $y$ and $z$ be other neighbors of $x$ that are on a shared face with $x$ (Figure~\ref{fig:triangleand4-vtx}).
Let $G'=(G\cup{yz})-\set{v,x}$.
Observe that $G'$ is \extend{3} to $G$.

Painter rejects $v$ when any vertex in $N_G(v)\cup\set{y,z}$ is colored.
Painter rejects $x$ when any vertex in $N_G(x)$ is colored,
and when $v$, $y$, or $z$ is dull and a neighbor of the dull vertex is being colored.
Since each of $x$ and $y$ is rejected at most 9 times, Painter wins.
\end{proof}

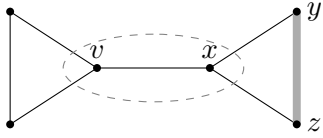
\begin{figure}
\begin{tikzpicture}[rotate = 90, scale = 1.5]
\path (0,-1) coordinate (x); 
\node[above] at (x) {$x$}; 
\path (0,0) coordinate (v); 
\node[above] at (v) {$v$}; 
\path (-.5,-1.77) coordinate (z); 
\node[right] at (z) {$z$};
\path (.5,-1.77) coordinate (y); 
\node[right] at (y) {$y$};
\path (-.5,.77) coordinate (a); 
\path (.5,.77) coordinate (b); 
\draw (v) -- (x); \draw (y) -- (x) -- (z); 
\draw (v) -- (a) -- (b) -- cycle; 
\draw[Eedge] (y) -- (z); 
\draw[color = gray, dashed] (0,-.5) ellipse (.3cm and .8cm); 
\fill (x) circle (1pt); 
\fill (v) circle (1pt); 
\fill (z) circle (1pt); 
\fill (y) circle (1pt); 
\fill (a) circle (1pt); 
\fill (b) circle (1pt); 
\end{tikzpicture}
\caption{Configuration for Lemma~\ref{triangleand4-vtx}: $d_G(v)\le7$; $d_G(x)\le4$; $v$ has no other 3-neighbors.}
\label{fig:triangleand4-vtx}
\end{figure}

\begin{lemma}\label{3triangles}
Vertices $u,v,x,y,z$ forming 3-faces $vzx$, $vxy$, and $vyu$ (vertices given in clockwise order) with $d_G(v)\le 6$ is reducible.
%If vertices $u,v,x,y,z$ form 3-faces $vzx$, $vxy$, and $vyu$ (vertices given in clockwise order),
%then $d_G(v)\ge7$.
\end{lemma}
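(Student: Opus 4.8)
The plan is to delete $v$ and, exactly as in Lemma~\ref{adjacenttriangle}, to add a single chord among the consecutive neighbors $z,x,y,u$ so that $N_G(v)$ is forced to receive three colors. First I would clear away the low-degree cases with the earlier lemmas. Since $vzx$, $vxy$, $vyu$ are $3$-cycles, Lemma~\ref{4vertex3-cycle} lets me assume each contains at most one $4^-$-vertex; in particular if $d_G(v)=4$ then all of $z,x,y,u$ are $5^+$-vertices. For $d_G(v)\in\{5,6\}$ the vertex $v$ lies on more than one $3$-face, so Lemma~\ref{triangleand4-vtx} handles the case that $v$ has a single $4^-$-neighbor and Lemma~\ref{3nbrsandexpfaces} handles the case that $v$ has at least two $3$-neighbors (the incident expensive faces make the inequality $d+k-e_3-e_4<10$ easy to verify). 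Thus I may assume in the critical case that every neighbor of $v$ is a $5^+$-vertex, and in all cases that $d:=d_G(v)\in\{4,5,6\}$.

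For the construction I would take $G'=(G\cup\{zy\})-v$. After deleting $v$ the diagonal $vx$ disappears, so $zy$ can be drawn through the region formerly occupied by $v$ and the embedding remains toroidal. In $G'$ the vertices $z,x,y$ form a triangle (using the existing edges $zx$ and $xy$ together with $zy$), so every proper coloring of $G'$ places three distinct colors on $\{z,x,y\}\subseteq N_G(v)$. This is what is required to see that $G'$ is \extend{3} to $G$, so by Remark~\ref{rem:extend} it suffices to give a $G'$-first strategy in which $v$ is rejected at most nine times.

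Painter rejects $v$ whenever one of its at most $d$ neighbors is colored, and additionally keeps $v$ in reserve so that its eventual color supplies the color that each of $z$ and $y$ loses, since the fake partner across the chord $zy$ does not count toward the dynamic requirement in $G$. As in Lemma~\ref{adjacenttriangle}, each chord endpoint naively contributes at most two reservation rounds, giving the crude bound $d+4$; this already finishes $d\le5$.

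The hard part is $d=6$, where the crude count gives $10$, and this is where the third triangle (absent in Lemma~\ref{adjacenttriangle}) must be used. Because $z,x,y$ are mutually adjacent in $G'$, the color $c_x$ lies in \emph{both} $N_G(z)$ and $N_G(y)$; hence the single fresh color Painter must reserve $v$ to contribute is constrained by the one shared color $c_x$ together with at most one further color for $z$ and one for $y$, namely three colors rather than four. Since $x$ is itself a neighbor of $v$ (and, through the face $vyu$, so is $u$), the round in which the shared color $c_x$ is realized is already counted among the proper rejections, so the reservations collapse to at most three genuinely new rounds and $v$ is rejected at most $6+3=9$ times. The step I expect to be the main obstacle is making this overlap rigorous in the online game: one must show that Painter can always arrange for the first required color of both $z$ and $y$ to arrive through their common neighbor $x$, rather than being forced by Lister through their private neighborhoods, so that the two endpoints share a reservation instead of each costing two.
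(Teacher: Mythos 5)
Your degree reductions are sound---in fact, combining Lemmas~\ref{4vertex3-cycle}, \ref{triangleand4-vtx} and~\ref{3nbrsandexpfaces} as you do gives that \emph{every} neighbor of $v$ is a $5^+$-vertex, slightly stronger than what the paper records (it shows $d_G(x),d_G(y)\ge5$ and $d_G(u)>3$), and this conveniently removes any worry about $3$-neighbors of $v$ off the three triangles. Your construction $G'=(G\cup\set{yz})-v$, the observation that $z,x,y$ induce a triangle in $G'$, and the crude count $d+2+2\le 9$ for $d\le5$ all match the paper. The gap is exactly where you predicted: $d=6$. Painter cannot ``arrange for the first required color of both $z$ and $y$ to arrive through their common neighbor $x$.'' Lister chooses what to present, and the $G'$-first strategy will color vertices of $V(G')$ as Lister feeds them to it (Painter does not get to design that strategy; it is only known to win on $G'$, and a legitimate winning strategy will color an isolated, first-time-presented vertex immediately). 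So Lister can present, one per round and each time alongside $v$, two private neighbors of $y$ and then two private neighbors of $z$ (neighbors lying outside $N_G(v)$), before ever presenting a vertex of $N_G(v)$. In each of these four rounds the relevant endpoint is still dull and one of its neighbors is being colored, so your strategy must reject $v$, and none of these rounds coincides with a properness round. Lister then presents $v$ alongside the six vertices of $N_G(v)$ one per round: $4+6=10$ rejections, and $v$'s tokens are exhausted. The hoped-for overlap simply cannot be forced in the online game, so the main case of the lemma is left unproved.

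The paper closes the $d=6$ case by changing the bookkeeping rather than the order of colors. Since $xy,xz\in E(G)$ and $x\in N_G(v)$, the colors $c_x$ and $c_v$ are \emph{automatically} two distinct colors lying in both $N_G(y)$ and $N_G(z)$, no matter when $x$ gets colored ($c_v\ne c_x$ is just properness of $v$). Hence each of $y$ and $z$ needs only one additional color, namely one color on $N_G(y)-\set{v,x}$ (resp.\ $N_G(z)-\set{v,x}$) distinct from $c_v$ and $c_x$. So Painter reserves $v$ only against the round in which $N_G(y)-\set{v,x}$ receives its first color, and likewise for $z$: one reservation per endpoint instead of your two, for a total of at most $6+1+1=8\le9$ rejections. (To make this airtight the reservation should stay alive until $N_G(y)-\set{v,x}$ has a color from a round other than the one in which $x$ itself is colored; if the first private vertex happens to be colored simultaneously with $x$, that round rejects $v$ anyway, so the cost is still one extra round per endpoint.) This one-shot reservation on the private neighborhoods---which never needs $c_x$ to ``arrive first,'' only to exist at the end---is the idea missing from your write-up; with it, the rest of your argument goes through as stated.
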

\begin{proof}
Suppose $d_G(v)<7$ (Figure~\ref{fig:3triangles}).
Lemma~\ref{adjacenttriangle} implies that $d_G(x)\ge5$ and $d_G(y)\ge5$.
Thus $v$ has at most four $3$-neighbors.
Lemma~\ref{3nbrsandexpfaces} implies that $vyu$ is not an expensive 3-face;
in particular $d_G(u)>3$.
Let $G'=(G\cup{yz})-v$.
Observe that $G'$ is \extend{3} to $G$.

Painter rejects $v$ when any of $N_G(v)$ is colored.
If Painter blindly rejects $v$ when $y$ or $z$ is dull and a neighbor of that vertex is being colored,
then $v$ may be rejected $6+2+2$ times, which is too many so we must state Painter's strategy more carefully.
Since $y$ and $z$ share a common neighbor $x$,
we instead have Painter reject $v$ when any vertex in $N_G(v)\cup\set{x}$ is colored.
Additionally, Painter rejects $v$ if $N_G(y)-\set{v,x}$ has no colors but has a vertex that is being colored;
at the end of the game $N_G(y)-\set{v,x}$ will have at least one color distinct from those on $x$ and $v$, so $y$ will be $3$-dynamic.
Similarly, Painter rejects $v$ if $N_G(z)-\set{v,x}$ has no colors but has a vertex that is being colored.
Under this strategy, $v$ is rejected at most 9 times, and Painter wins.
\end{proof}

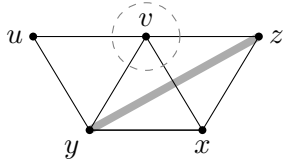
\begin{figure}
\begin{center}
\begin{tikzpicture}[scale = 1.5]
\path (.5,.83) coordinate (v); 
\node[above] at (v) {$v$}; 
\path (-.5,.83) coordinate (a); 
\node[left] at (a) {$u$};
\path (1.5,.83) coordinate (z); 
\node[right] at (z) {$z$};
\path (0,0) coordinate (y); 
\node[below left] at (y) {$y$};
\path (1,0) coordinate (x); 
\node[below] at (x) {$x$};
\draw[Eedge] (y) -- (z); 
\draw (a) -- (v) -- (z) -- (x) -- (y) -- cycle;
\draw (v) -- (y) -- (x) -- cycle; 
\draw[color = gray, dashed] (v) ellipse (.3cm and .3cm); 
\fill (v) circle (1pt); 
\fill (a) circle (1pt); 
\fill (z) circle (1pt); 
\fill (y) circle (1pt); 
\fill (x) circle (1pt); 
\end{tikzpicture}
\end{center}
\caption{Configuration for Lemma~\ref{3triangles}: $d_G(v)\le6$.}
\label{fig:3triangles}
\end{figure}

\begin{lemma}\label{3and4faces}
An expensive 4-face sharing an edge with a 3-face is reducible.
\end{lemma}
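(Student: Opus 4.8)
The plan is to reduce the configuration by deleting both $3$-vertices of the expensive $4$-face and adding just enough edges to make each deleted neighborhood rainbow. First I would fix notation. By Lemma~\ref{adjacent3and3} the two $3$-vertices $v_1,v_2$ of the expensive $4$-face are nonadjacent, so they are opposite corners; write $w_1,w_2$ for the two common neighbors (the other corners) and let $a_i$ be the third neighbor of $v_i$, so $N_G(v_i)=\set{w_1,w_2,a_i}$. Every edge of the $4$-face has exactly one $3$-vertex endpoint, so the shared $3$-face meets it along an edge incident to some $v_i$, say $v_1w_1$, with apex $t$ a common neighbor of $v_1$ and $w_1$; since $d_G(v_1)=3$ we get $t\in\set{w_2,a_1}$, hence either $w_1w_2$ or $w_1a_1$ already lies in $E(G)$.

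Next I would set $S=\set{v_1,v_2}$ and take $E'$ to be the edges needed to turn each of $\set{w_1,w_2,a_1}$ and $\set{w_1,w_2,a_2}$ into a triangle, discarding any edge already present (in particular the one supplied by the $3$-face). The embedding point is that, up to reflection, the vertices $w_1,a_1,w_2,a_2$ occur in this cyclic order on the boundary of the face of $G-S$ created by deleting $v_1$ and $v_2$; thus every edge of $E'$ is either a side of that quadrilateral or the single diagonal $w_1w_2$, and none of them forces the crossing diagonal $a_1a_2$. Hence $G'=(G\cup E')-S$ still embeds on the torus, as promised in the opening remarks. Since the neighborhood of each $v_i$ is a triangle in $G'$ it receives three distinct colors, so $G'$ is \extend{3} to $G$.

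Then I would give Painter a $G'$-first strategy and bound the rejections of each $v_i$. Painter rejects $v_i$ when one of its neighbors $w_1,w_2,a_i$ is colored (at most $3$ rejections, which guarantee a proper color for $v_i$), and rejects $v_i$ when one of $w_1,w_2,a_i$ is dull and a neighbor of that vertex is being colored (at most $2$ per neighbor, hence at most $6$). Because $v_1$ and $v_2$ are nonadjacent, neither lies in the other's neighborhood, so no color of $v_1$ affects $v_2$'s dynamic requirement and no ordering rejection between them is needed. Thus each $v_i$ is rejected at most $3+6=9$ times, and Painter wins by Remark~\ref{rem:extend}.

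The delicate part is that the count lands at exactly $9$. What makes it go through is that $\deg_G(v_i)=3$, so the proper-coloring rejections cost only $3$ and leave precisely enough room for the six dull rejections; unlike the higher-degree vertex of Lemma~\ref{3triangles}, no refinement exploiting the shared neighbors $w_1,w_2$ is required here. The steps I would check most carefully are therefore the embedding—confirming that $E'$ never demands both diagonals $w_1w_2$ and $a_1a_2$—and the degenerate cases, namely $a_1=a_2$ (so that $v_1,v_2,w_1,w_2,a_1$ form a $K_{2,3}$) and the case where the $3$-face already supplies $w_1w_2$ rather than $w_1a_1$: in each of these some edges of $E'$ collapse or are already present, and both the toroidal embedding and the dull-rejection tally must be re-verified to stay at most $9$.
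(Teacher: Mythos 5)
Your construction differs genuinely from the paper's. The paper deletes only one of the two $3$-vertices (in its notation, the $3$-face is $vu_1z$, the $4$-face is $vyu_2u_1$, and only $v$ is deleted), adds the single edge $yz$, and keeps the other $3$-vertex $u_2$ in $G'$ precisely so that $u_2$'s own $3$-dynamic requirement forces $u_1$ and $y$ to receive distinct colors; together with the real edge $u_1z$ and the added edge $yz$, this makes $N_G(v)$ rainbow, with fake edges only at $y$ and $z$, and $v$ is rejected at most $3+2+2=7$ times. You instead delete both $3$-vertices and add four edges, and that is where the trouble starts.

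The gap is your assertion that ``no ordering rejection between them is needed'' because $v_1$ and $v_2$ are nonadjacent; this is false, and it is exactly the delicate point you claim does not exist. In your $G'$ the common neighbor $w_1$ has two fake edges ($w_1w_2$ and $w_1a_2$), so a $3$-dynamic coloring of $G'$ guarantees only one color on $N_G(w_1)\cap V(G')$: the other two guaranteed colors may lie on $w_2$ and $a_2$ and are lost when passing back to $G$. Hence $N_G(w_1)$ needs two further, mutually distinct colors from $v_1$ and $v_2$. If Painter colors $v_1$ and $v_2$ in the same round (which nothing in your reasoning forbids, since under your reading the dull rejections are triggered only by $G'$-vertices being colored), they contribute a single color, $w_1$ finishes with two colors, and Painter loses; the same problem occurs at $w_2$, which has three fake edges. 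This is precisely why the paper, whenever it deletes several vertices sharing a neighbor, inserts ordering rejections even between nonadjacent deleted vertices --- see ``Painter rejects $v_2$ when $v_1$ is colored'' in Lemmas~\ref{adjacent3and3} and~\ref{4vertex3-cycle}, and the cross-rejection rules plus the explicit verification paragraph in Lemma~\ref{4444face}. Your strategy can be repaired: have Painter decide $v_1$ before $v_2$ and reject $v_2$ whenever $w_1$ or $w_2$ is dull and $v_1$ is being colored; this stays within your budget of two rejections per dull neighbor, so the total remains $9$. But you would then also need to verify, as Lemma~\ref{4444face} does, that both lost colors at $w_1$ (and at $w_2$) are actually replaced, because the mechanism described before Lemma~\ref{2^-vertex} is stated for a single lost color per vertex. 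The paper's one-vertex reduction sidesteps all of this, which is why it is the better route here.
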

\begin{proof}
Lemma~\ref{adjacent3and3} implies that the 3-vertices of the expensive 4-face are not adjacent,
and Lemma~\ref{adjacent3and4} implies that the other vertices on the 4-face are $5^+$-vertices.
Let $vu_1z$ be the 3-face sharing an edge with the 4-face $vyu_2u_1$ where $d_G(v)=d_G(u_2)=3$ (Figure~\ref{fig:3and4faces}).
Let $G'=(G\cup{yz})-v$.
Observe that $G'$ is \extend{3} to $G$, and that $u_2$ guarantees that $u_1$ and $y$ receive distinct colors in any $3$-dynamic coloring of $G'$. 

Painter rejects $v$ when any vertex in $N_G(v)$ is colored,
or when $y$ or $z$ is dull and a neighbor of that vertex is being colored.
Since $v$ is rejected at most 7 times, Painter wins.
\end{proof}

\begin{figure}
\begin{center}
\begin{tikzpicture}[scale = 1.5]
\path (0,0) coordinate (u1); 
\node[below] at (u1) {$u_1$}; 
\path (-.5,.5) coordinate (z); 
\node[left] at (z) {$z$};
\path (0,1) coordinate (v); 
\node[above] at (v) {$v$};
\path (1,1) coordinate (y); 
\node[above] at (y) {$y$};
\path (1,0) coordinate (u2); 
\node[below] at (u2) {$u_2$};
\draw[Eedge] (y) -- (z); 
\draw (u1) -- (z) -- (v) -- (y) -- (u2) -- (u1) -- (v);
\draw[color = gray, dashed] (v) ellipse (.29cm and .29cm); 
\fill (u1) circle (1pt); 
\fill (z) circle (1pt); 
\fill (v) circle (1pt); 
\fill (y) circle (1pt); 
\fill (u2) circle (1pt); 
\end{tikzpicture}
\end{center}
\caption{Configuration for Lemma~\ref{3and4faces}: $d_G(v)=d_G(u_2)=3$.}
\label{fig:3and4faces}
\end{figure}
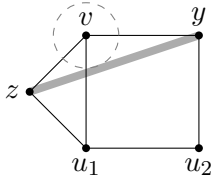

One additional configuration is needed for the torus. 

\begin{lemma}\label{4444face}
A 4-face with no $5^+$-vertex is reducible.
\end{lemma}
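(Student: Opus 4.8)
The plan is to strip away the easy cases with the earlier lemmas until the face is completely rigid, delete two opposite corners, and re-supply the lost $3$-dynamic constraints with a small set of added edges. Since the face has no $5^+$-vertex, every vertex on it is a $4^-$-vertex; if one of them were a $3^-$-vertex it would have a $4^-$-neighbor on the face, so Lemma~\ref{adjacent3and4} would apply. Thus I may assume all four face vertices are $4$-vertices. If the bounding $4$-cycle had a chord, or if two of its vertices were adjacent off the face, we would obtain a (possibly non-facial) $3$-cycle containing no $5^+$-vertex, which is reducible by Lemma~\ref{4vertex3-cycle}. So it suffices to treat an induced $4$-cycle $v_1v_2v_3v_4$ with $d_G(v_i)=4$ for each $i$.

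I would then delete the opposite pair $\set{v_1,v_3}$. Let $a_1,a_2$ be the neighbors of $v_1$ off the face and $c_1,c_2$ those of $v_3$, and note that $v_2$ and $v_4$ are common neighbors of $v_1$ and $v_3$. For $G'$ to be \extend{3} to $G$ I need $N_G(v_1)$ and $N_G(v_3)$ each to be forced to carry three colors, so that coloring $v_1$ or $v_3$ back is never blocked by the $3$-dynamic condition. A single added edge such as $v_2v_4$ only forces two colors, so I would let $E'$ consist of $v_2v_4$ together with the edges turning $\set{v_2,v_4,a_1}$ and $\set{v_2,v_4,c_1}$ into triangles, and set $G'=(G\cup E')-\set{v_1,v_3}$. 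The two triangles share the edge $v_2v_4$; since $v_1$ and $v_3$ sit at opposite corners of the deleted face, $v_2v_4$ can be routed through the vacated region with $a_1$ attached on the $v_1$-side and $c_1$ on the $v_3$-side, so $G'$ should still embed on the torus (ignoring multiedges, per our convention). As $v_2,v_4,a_1$ are now pairwise adjacent they receive three distinct colors, giving $N_G(v_1)$ three colors, and symmetrically for $v_3$; hence $G'$ is \extend{3} to $G$.

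Painter would play a $G'$-first strategy and color $v_1,v_3$ last. For each $i$, rejecting $v_i$ when one of its four neighbors is colored costs four rejections, and its private off-face vertex ($a_1$ for $v_1$, $c_1$ for $v_3$) costs the usual two dull-rejections. The main obstacle is $v_2$ and $v_4$: they now carry the added edges, so the blanket dull-rule would charge two rejections for each of them to each of $v_1$ and $v_3$, giving $4+2+2+2=10$, which is one too many. This is exactly the over-counting that Lemma~\ref{3triangles} had to avoid, and I expect resolving it to be the crux. The saving should come from the fact that $v_2$ and $v_4$ are each adjacent to \emph{both} deleted vertices: because $v_1$ and $v_3$ are colored last and independently, Painter can always split between them the job of giving $N_G(v_2)$ (and $N_G(v_4)$) its third color, so the responsibility for each shared face vertex is shared rather than duplicated.

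Concretely, I would replace the blanket rule at $v_2$ and $v_4$ by a refined rule in the spirit of Lemma~\ref{3triangles}: reject $v_i$ on behalf of $v_2$ only while $N_G(v_2)\setminus\set{v_1,v_3}$ is still monochromatic and one of its vertices is being colored, and likewise for $v_4$, charging at most one such rejection to each of $v_1$ and $v_3$ per shared face vertex. The step I expect to require the most care is verifying that this refined rule really does leave $N_G(v_2)$ and $N_G(v_4)$ with three colors in $G$ at the end of the game, even though the extra colors those vertices see in $G'$ may sit on the added edges $v_2v_4,v_2a_1,v_4a_1,\dots$; granting that, each of $v_1$ and $v_3$ is rejected at most $4+2+1+1=8\le 9$ times, and Painter wins by Remark~\ref{rem:extend}.
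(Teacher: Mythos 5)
There is a genuine gap, and it is fatal to the construction itself, not just to the bookkeeping. The whole framework tolerates at most \emph{one} added edge per vertex of $G'$: a $3$-dynamic coloring of $G'$ puts at least three colors on $N_{G'}(u)$, at most one of these may be ``lost'' because it sits on an added neighbor, and the single deleted $G$-neighbor of $u$ can restore at most one color via the dull rule. In your $G'$, the vertex $a_1$ is incident to \emph{two} added edges ($a_1v_2$ and $a_1v_4$) but has only one deleted neighbor, $v_1$. Concretely, let Lister present all real neighbors of $a_1$ other than $v_1$ in a single round; nothing in the configuration prevents them from being pairwise nonadjacent, and a perfectly legitimate winning $G'$-strategy may color them all, since $v_2$ and $v_4$ (forced distinct by the added edge $v_2v_4$) will still supply $N_{G'}(a_1)$ with its three colors. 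After that round, $N_G(a_1)\setminus\set{v_1}$ carries one color forever and no neighbor of $a_1$ is ever colored again, so your dull rule for $a_1$ never fires again; whenever $v_1$ is later colored it adds exactly one more color, and $N_G(a_1)$ finishes with two colors instead of three. The same failure occurs at $c_1$, and $v_2,v_4$ carry three added edges each. Your proof must work for an \emph{arbitrary} winning $G'$-strategy, so this cannot be waved away, and no local rejection rule for $v_1$ alone can repair it: $v_1$ simply cannot put two colors into $N_G(a_1)$.

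Separately, the step you flagged as the crux fails in the direction opposite to your intuition: since $v_1$ and $v_3$ are nonadjacent and both deleted, and none of your rules rejects one of them when the other is being colored, Lister can have both colored in the same round, hence with the same color. If the two off-face neighbors of $v_2$ were also colored together in one earlier round (your refined rule rejects $v_1,v_3$ in that round, but does nothing more), then $N_G(v_2)$ finishes with exactly two colors. ``Colored last and independently'' is the problem, not the solution. This particular hole is patchable---reject $v_3$ whenever $v_1$ is colored ($4+2+1+1+1=9$ rejections), and use the zero-colors version of the rule from Lemma~\ref{3triangles}, since your ``monochromatic'' version fires twice per protected pair (once when each off-face neighbor is first colored), pushing $v_1$ to ten rejections---but no such patch addresses the loss at $a_1$ and $c_1$. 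This contrast explains the paper's choice, which looks strange until you see the obstruction you ran into: it deletes all four face vertices and adds \emph{no} edges, making the four deleted colors distinct by a rejection chain ($v_2$ waits for $v_1$, $v_3$ for $v_1,v_2$, etc.) and securing each $v_i$'s third neighborhood color by the conditions that reject $v_1,v_2$ when both $y_j$ and $z_j$ of an opposite corner are being colored in one round. That design keeps every vertex of $G'$ free of added edges, which is precisely what your two-vertex deletion cannot achieve.
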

\begin{proof}
Lemma~\ref{adjacent3and4} implies that it suffices to consider a face $v_1v_2v_3v_4$ in which all vertices have degree 4.
For $i\in[4]$, let $y_i$ and $z_i$ be neighbors (taken clockwise) of $v_i$ not on the 4-face.
Let $S=\set{\VEC v14}$, and let $G'=G-S$.
Observe that $G'$ is \extend{3} to $G$.

Lemmas~\ref{adjacent3and4} and~\ref{4vertex3-cycle}
imply that $d_G(w)\ge4$ for each $w\in N_G(S)$ and that $z_i\ne y_{i+1}$
where indices are taken mod 4.
Thus if $d_{G'}(y_i)=2$ or $d_{G'}(z_i)=2$ for some $i\in[4]$,
then that vertex is adjacent to $v_{i+2}$ with indices taken mod 4.

Painter rejects $v_1$ in a round if any of the following conditions hold:
\begin{itemize}
\item $y_1$ or $z_1$ is colored.
\item If $d_{G'}(y_1)=2$ and one of the neighbors in $G'$ of $y_1$ is being colored.
\item If $d_{G'}(z_1)=2$ and one of the neighbors in $G'$ of $z_1$ is being colored.
\item Both $y_2$ and $z_2$ are being colored or both $y_4$ and $z_4$ are being colored.
\end{itemize}

Painter rejects $v_2$ in a round if any of the following conditions hold:
\begin{itemize}
\item $v_1$, $y_2$ or $z_2$ is colored.
\item If $d_{G'}(y_2)=2$ and one of the neighbors in $G'$ of $y_2$ is being colored.
\item If $d_{G'}(z_2)=2$ and one of the neighbors in $G'$ of $z_2$ is being colored.
\item Both $y_1$ and $z_1$ are being colored or both $y_3$ and $z_3$ are being colored.
\end{itemize}

Painter rejects $v_3$ in a round if any vertex in $\set{v_1,v_2,    y_2,z_2,y_3,z_3,y_4,z_4}$ is colored.
Painter rejects $v_4$ in a round if any vertex in $\set{v_1,v_2,v_3,y_1,z_1,y_3,z_3,y_4,z_4}$ is colored.
For $i\in[4]$, Painter rejects $v_i$ at most 9 times.

We now verify that this strategy for Painter produces a 3-dynamic coloring of $G$.
Distinct colors are given to $\set{v_1,v_2,v_3,v_4}$, and each $v_i$ avoids the colors on $y_i$ and $z_i$.
Thus Painter's strategy produces a proper coloring of $V(G)$.

If any $w\in N_G(S)$ has $d_{G'}(w)=2$, then its neighbors provide two colors to $N_G(w)$,
and either $v_1$ or $v_2$ will provide a third color.
For any $w\in N_G(S)$ with $d_{G'}(w)\ge3$,
Painter's strategy will result in at least three colors appearing in $N_G(w)-S$.
Since the colors on $\set{v_1,v_2,v_3,v_4}$ are all distinct, it suffices to show that for $i\in[4]$,
either $y_i$ or $z_i$ has a color distinct from both $v_{i-1}$ and $v_{i+1}$ with indices taken mod 4.
The fourth conditions for Painter rejecting $v_1$ or $v_2$,
together with the rules for Painter rejecting $v_3$ and $v_4$, ensures that this is the case.
\end{proof}

\section{discharging for toroidal graphs}\label{sec:discharging}

In this section, we give the discharging argument for Theorem~\ref{thm:maintorus}.
%Recall that $G$ is a smallest planar graph that is not 3-dynamically 10-paintable. 

\begin{lemma}\label{lem:discharge}
Every toroidal graph has one of the reducible configurations from Section~\ref{sec:reducibleconfigs}. 
\end{lemma}

\begin{proof}
Suppose Lemma~\ref{lem:discharge} is false and let $G$ be a graph embedded in the torus with none of the configurations from Section~\ref{sec:reducibleconfigs}.

For $x \in V(G) \cup F(G)$, let $c(x)$ be the initial charge on $x$, and let $c^*(x)$ be the final charge on $x$.
We use \emph{face charging}: for a vertex $v$ we set $c(v) = 2d(v)-6$, and for a face $f$ we set $c(f) = \ell(f) - 6$.  
By Euler's formula, the sum of initial charge is
\[ \sum_{v \in V(G)} \!\!\!\left(2d(v) - 6\right) + \!\sum_{f \in F(G)} \!\!\!\left(\ell(f) -6\right) = 4 |E(G)| - 6|V(G)| + 2|E(G)| - 6|F(G)| \le 0. \] 

Using that $G$ has none of the reducible structures from Section~\ref{sec:reducibleconfigs},
we will argue that after following the rules below the final charge on each vertex and face of $G$ is non-negative.
Because charge is only moved, and neither created nor destroyed, 
we use this to obtain a contradiction.
This contradiction shows that every toroidal graph must have a reducible configuration,
and thus that every toroidal graph is 3-dynamically 10-paintable. 

By Lemma~\ref{2^-vertex}, $G$ has no $2^-$-vertices so all vertices start with non-negative charge. Thus our discharging rules consist of vertices giving charge to the faces. Recall that a 3-face is expensive if it has a 3-vertex and a 4-face is expensive if it has two 3-vertices. Say a 3-face is \emph{costly} when it contains a 4-vertex. By Lemma~\ref{4vertex3-cycle} a 3-face has at most one $4^-$-vertex. By Lemma~\ref{adjacent3and3} a 4-face has at most two 3-vertices. Additionally, if a 4-face has two 3-vertices, then Lemma~\ref{adjacent3and4} implies that they are nonadjacent, and since  Lemma~\ref{3nbrsandexpfaces} applied to a $5^-$-vertex $v$ implies that $v$ is not on any expensive faces, we have that the other vertices of an expensive 4-face are $6^+$-vertices. If a 4-face has exactly one 3-vertex, then Lemma~\ref{adjacent3and4} implies that its neighbors are $5^+$-vertices. Finally, Lemma~\ref{adjacent3and3} implies that a 5-face has at most two 3-vertices and that if a 5-face has two 3-vertices, then they are not adjacent. Figure~\ref{fig:rules} shows the discharging rules for all possible faces of length at most 5. 

We move charge according to the following rules which are illustrated in Figure~\ref{fig:rules} for all faces receiving charge. 
\begin{enumerate}
\renewcommand{\theenumi}{R\arabic{enumi}}
\item A 3-face with a 3-vertex takes $\frac{3}{2}$ from each of its $5^+$-vertices. 
\item A 3-face with a 4-vertex takes $\frac{1}{2}$ from each of its 4-vertices and $\frac{5}{4}$ charge from each of its $5^+$-vertices. 
\item A 3-face with no $4^-$-vertices takes $1$ from each of its vertices. 
\item A 4-face with exactly two $3$-vertices takes $1$ from each of its $6^+$-vertices. 
\item A 4-face with exactly one 3-vertex takes $\frac{1}{2}$ from its vertex opposite the 3-vertex and $\frac{3}{4}$ charge from its other two vertices.
\item A 4-face with no 3-vertices takes $\frac{1}{2}$ from each of its vertices. 
\item A 5-face with two 3-vertices takes $\frac{1}{2}$ from their common neighbor on the face and $\frac{1}{4}$ charge from its two other vertices. 
\item A 5-face with at most one 3-vertex takes $\frac{1}{4}$ from each of its $4^+$-vertices. 
\item A $6^+$-face takes $\frac{1}{4}$ from each incident $4^+$-vertex.
\end{enumerate}

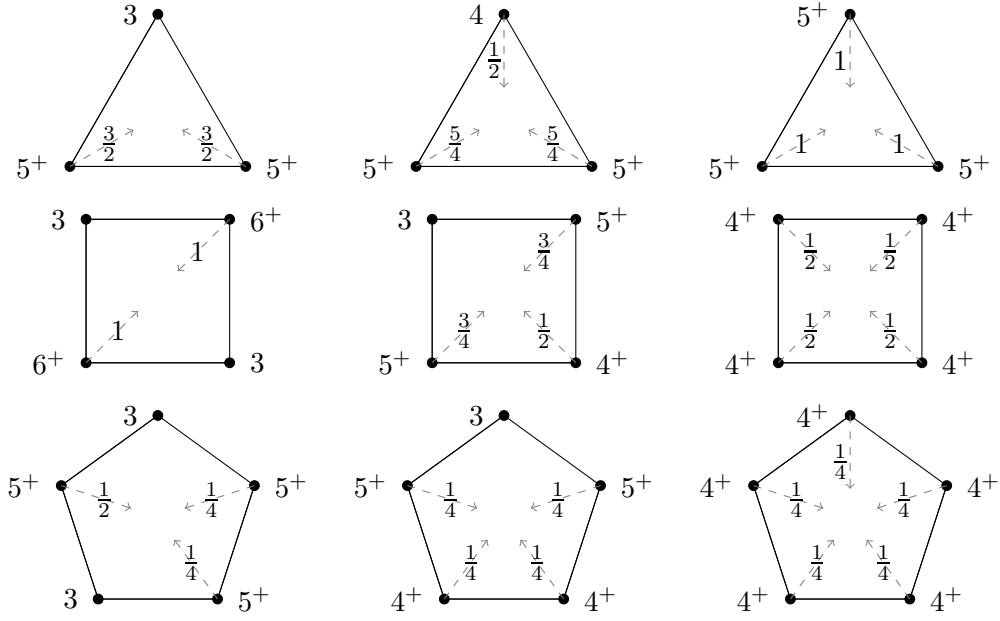
\begin{figure}
\begin{tabular}{ccc}
\begin{tikzpicture}[scale=1.5]
\tikzstyle{vertex}=[circle,fill=black,draw,inner sep=1.3pt]
\draw (90:.9cm) node[vertex] {} -- (210:.9cm) node[vertex] {} -- (330:.9cm) node[vertex] {} -- cycle;
\draw (90:.9cm) node[label=left:{$3$}] {} -- 
	(210:.9cm) node[label=left:{$5^+$}] {} -- 
	(330:.9cm) node[label=right:{$5^+$}] {};
\foreach \a in {210,330} \draw [->,dashed,gray] (\a:.9cm) -- (\a:.25cm);
\draw (210:.5cm) node {${\frac32}$};
\draw (330:.5cm) node {${\frac32}$};
\end{tikzpicture}
&
\begin{tikzpicture}[scale=1.5]
\tikzstyle{vertex}=[circle,fill=black,draw,inner sep=1.3pt]
\draw (90:.9cm) node[vertex] {} -- (210:.9cm) node[vertex] {} -- (330:.9cm) node[vertex] {} -- cycle;
\draw (90:.9cm) node[label=left:{$4$}] {} -- 
	(210:.9cm) node[label=left:{$5^+$}] {} -- 
	(330:.9cm) node[label=right:{$5^+$}] {};
\foreach \a in {90,210,330} \draw [->,dashed,gray] (\a:.9cm) -- (\a:.25cm);
\draw (100:.5cm) node {${\frac12}$};
\draw (210:.5cm) node {${\frac54}$};
\draw (330:.5cm) node {${\frac54}$};
\end{tikzpicture}
&
\begin{tikzpicture}[scale=1.5]
\tikzstyle{vertex}=[circle,fill=black,draw,inner sep=1.3pt]
\draw (90:.9cm) node[vertex] {} -- (210:.9cm) node[vertex] {} -- (330:.9cm) node[vertex] {} -- cycle;
\draw (90:.9cm) node[label=left:{$5^+$}] {} -- 
	(210:.9cm) node[label=left:{$5^+$}] {} -- 
	(330:.9cm) node[label=right:{$5^+$}] {};
\foreach \a in {90,210,330} \draw [->,dashed,gray] (\a:.9cm) -- (\a:.25cm);
\draw (100:.5cm) node {${1}$};
\draw (210:.5cm) node {${1}$};
\draw (330:.5cm) node {${1}$};
\end{tikzpicture}
\\
\begin{tikzpicture}[scale=1.5]
\tikzstyle{vertex}=[circle,fill=black,draw,inner sep=1.3pt]
\draw (45:.9cm) node[vertex] {} -- (135:.9cm) node[vertex] {} -- (225:.9cm) node[vertex] {} -- (315:.9cm) node[vertex] {} -- cycle;
\draw (45:.9cm) node[label=right:{$6^+$}] {} --
	(135:.9cm) node[label=left:{$3$}] {} --
	(225:.9cm) node[label=left:{$6^+$}] {} --
	(315:.9cm) node[label=right:{$3$}] {};
\foreach \a in {45, 225} \draw [->,dashed,gray] (\a:.9cm) -- (\a:.25cm);
\draw (45:.5cm) node {${1}$};
\draw (225:.5cm) node {${1}$};
\end{tikzpicture}
&
\begin{tikzpicture}[scale=1.5]
\tikzstyle{vertex}=[circle,fill=black,draw,inner sep=1.3pt]
\draw (45:.9cm) node[vertex] {} -- (135:.9cm) node[vertex] {} -- (225:.9cm) node[vertex] {} -- (315:.9cm) node[vertex] {} -- cycle;
\draw (45:.9cm) node[label=right:{$5^+$}] {} --
	(135:.9cm) node[label=left:{$3$}] {} --
	(225:.9cm) node[label=left:{$5^+$}] {} --
	(315:.9cm) node[label=right:{$4^+$}] {};
\foreach \a in {45, 225, 315} \draw [->,dashed,gray] (\a:.9cm) -- (\a:.25cm);
\draw (45:.5cm) node {${\frac34}$};
\draw (225:.5cm) node {${\frac34}$};
\draw (315:.5cm) node {${\frac12}$};
\end{tikzpicture}
&
\begin{tikzpicture}[scale=1.5]
\tikzstyle{vertex}=[circle,fill=black,draw,inner sep=1.3pt]
\draw (45:.9cm) node[vertex] {} -- (135:.9cm) node[vertex] {} -- (225:.9cm) node[vertex] {} -- (315:.9cm) node[vertex] {} -- cycle;
\draw (45:.9cm) node[label=right:{$4^+$}] {} --
	(135:.9cm) node[label=left:{$4^+$}] {} --
	(225:.9cm) node[label=left:{$4^+$}] {} --
	(315:.9cm) node[label=right:{$4^+$}] {};
\foreach \a in {45, 135, 225, 315} \draw [->,dashed,gray] (\a:.9cm) -- (\a:.25cm);
\draw (45:.5cm) node {${\frac12}$};
\draw (135:.5cm) node {${\frac12}$};
\draw (225:.5cm) node {${\frac12}$};
\draw (315:.5cm) node {${\frac12}$};
\end{tikzpicture}
\\
\begin{tikzpicture}[scale=1.5]
\tikzstyle{vertex}=[circle,fill=black,draw,inner sep=1.3pt]
\draw (90:.9cm) node[vertex] {} -- (162:.9cm) node[vertex] {} -- (234:.9cm) node[vertex] {} -- (306:.9cm) node[vertex] {} -- (18:.9cm) node[vertex] {} -- cycle;
\draw (90:.9cm) node[label=left:{$3$}] {} -- 
	(162:.9cm) node[label=left:{$5^+$}] {} -- 
	(234:.9cm) node[label=left:{$3$}] {} -- 
	(306:.9cm) node[label=right:{$5^+$}] {} -- 
	(18:.9cm) node[label=right:{$5^+$}] {};
\foreach \a in {18, 162, 306} \draw [->,dashed,gray] (\a:.9cm) -- (\a:.25cm);
\draw (162:.5cm) node {${\frac12}$};
\draw (306:.5cm) node {${\frac14}$};
\draw (18:.5cm) node {${\frac14}$};
\end{tikzpicture}
&
\begin{tikzpicture}[scale=1.5]
\tikzstyle{vertex}=[circle,fill=black,draw,inner sep=1.3pt]
\draw (90:.9cm) node[vertex] {} -- (162:.9cm) node[vertex] {} -- (234:.9cm) node[vertex] {} -- (306:.9cm) node[vertex] {} -- (18:.9cm) node[vertex] {} -- cycle;
\draw (90:.9cm) node[label=left:{$3$}] {} -- 
	(162:.9cm) node[label=left:{$5^+$}] {} -- 
	(234:.9cm) node[label=left:{$4^+$}] {} -- 
	(306:.9cm) node[label=right:{$4^+$}] {} -- 
	(18:.9cm) node[label=right:{$5^+$}] {};
\foreach \a in {18, 162, 234, 306} \draw [->,dashed,gray] (\a:.9cm) -- (\a:.25cm);
\draw (162:.5cm) node {${\frac14}$};
\draw (234:.5cm) node {${\frac14}$};
\draw (306:.5cm) node {${\frac14}$};
\draw (18:.5cm) node {${\frac14}$};
\end{tikzpicture}
&
\begin{tikzpicture}[scale=1.5]
\tikzstyle{vertex}=[circle,fill=black,draw,inner sep=1.3pt]
\draw (90:.9cm) node[vertex] {} -- (162:.9cm) node[vertex] {} -- (234:.9cm) node[vertex] {} -- (306:.9cm) node[vertex] {} -- (18:.9cm) node[vertex] {} -- cycle;
\draw (90:.9cm) node[label=left:{$4^+$}] {} -- 
	(162:.9cm) node[label=left:{$4^+$}] {} -- 
	(234:.9cm) node[label=left:{$4^+$}] {} -- 
	(306:.9cm) node[label=right:{$4^+$}] {} -- 
	(18:.9cm) node[label=right:{$4^+$}] {};
\foreach \a in {18, 90, 162, 234, 306} \draw [->,dashed,gray] (\a:.9cm) -- (\a:.25cm);
\draw (100:.5cm) node {${\frac14}$};
\draw (162:.5cm) node {${\frac14}$};
\draw (234:.5cm) node {${\frac14}$};
\draw (306:.5cm) node {${\frac14}$};
\draw (18:.5cm) node {${\frac14}$};
\end{tikzpicture}
\end{tabular}
\caption{Discharging rules.}
\label{fig:rules}
\end{figure}
 
First we argue that every face ends with non-negative charge. Let $f$ be a face.

Suppose $\ell(f) = 3$, so $c(f) = -3$. Since $f$ has at most one $4^-$-vertex, exactly one of (R1)--(R3) applies to $f$. Under any of (R1)--(R3), $f$ receives 3 so $c^*(f) = 0$. 

Suppose $\ell(f) = 4$, so $c(f) = -2$. The cases that $f$ has two, one, or zero 3-vertices are covered by (R4)--(R6). Under any of (R4)--(R6), $f$ receives 2 so $c^*(f)=0$. 

Suppose $\ell(f) = 5$, so $c(f) = -1$. The cases that $f$ has two 3-vertices, or at most one 3-vertices are covered by (R7) and (R8). Under either, $f$ receives at least 1 and $c^*(f) \ge 0$.

Suppose $\ell(f) \ge 6$, so $c(f) \ge 0$. Since faces never lose charge, and by Lemma~\ref{adjacent3and3} $f$ gains charge from at least one vertex, so we have $c^*(f) > 0$.

\medskip %%%subsection? 

It remains to show that every vertex ends with non-negative charge. We further show that $5^+$-vertices end with positive charge. 
Let $w$ be a vertex.
By Lemma~\ref{2^-vertex}, $d(w) \ge 3$.

\textbf{Case 1}: $d(w) = 3$. \\
We have $c(w) = 0$, and $w$ neither gives nor receives charge. 
Thus $c^*(w) = 0$. 

\smallskip 

\textbf{Case 2:} $d(w) = 4$. \\
We have $c(w) = 2$, and  Lemma~\ref{adjacent3and4} implies that $w$ has no 3-neighbors.
Thus $w$ only gives charge under (R2), (R5), (R6), and (R8), and $w$ gives at most $\frac{1}{2}$ to each of four incident faces. Thus $c^*(w) \ge 2 - 4\cdot\frac{1}{2} = 0$.

\smallskip 

\textbf{Case 3:} $d(w) = 5$. \\
We have $c(w) = 4$, and Lemma~\ref{3nbrsandexpfaces} implies that if $w$ has at least two 3-neighbors, then $w$ has exactly five 3-neighbors.
Thus $w$ can have zero, one, or five 3-neighbors. We break into cases based on the number of 3-neighbors of $w$. 

\emph{Case 3a:} $w$ has no 3-neighbors. \\
By (R2) and (R3), $w$ gives at most $\frac{5}{4}$ to costly 3-faces and at most 1 to other 3-faces; 
by (R5), (R6), and (R8) $w$ gives at most $\frac{1}{2}$ to each incident $4^+$-face. 
By Lemma~\ref{adjacenttriangle}, $w$ is on at most two 3-faces. 
If $w$ is not on any costly 3-face, then $c^*(w)\geq4-2\cdot1-3\cdot\frac{1}{2}>0$. 
If $w$ is on a costly 3-face, then by Lemma~\ref{triangleand4-vtx} it is on no other 3-face and so
 $c^*(w)\geq4-1\cdot\frac{5}{4}-4\cdot\frac{1}{2}>0$.

\emph{Case 3b:} $w$ has exactly one 3-neighbor $x$. \\
Lemma~\ref{triangleand4-vtx} implies that $w$ is incident to at most one 3-face as in Figure~\ref{fig:3b4b}.
If $w$ is not on a 3-face, then by (R5)--(R8), it gives at most $\frac{3}{4}$ to each incident face.
Thus $c^*(w) \ge 4 - 5 \cdot \frac{3}{4} > 0$.

If $w$ is on a 3-face $f$ with $x$, then (R1) implies that $w$ gives $\frac{3}{2}$ to $f$.
By (R5)--(R8), $w$ gives at most $\frac{3}{4}$ to the other face shared with $x$, and at most $\frac{1}{2}$ to each other incident face.
Thus $c^*(w) \geq 4 - 1 \cdot \frac{3}{2} - 1 \cdot \frac{3}{4} - 3 \cdot \frac{1}{2} >0$.
If $w$ is on a 3-face $f$ that does not contain $x$, then (R5)--(R8) implies that $w$ gives at most $\frac{3}{4}$ to each face shared with $x$, at most $\frac{5}{4}$ to $f$, and at most $\frac{1}{2}$ to the remaining incident faces.
Thus $c^*(w)\geq 4-2\cdot\frac{3}{4} - 1\cdot \frac{5}{4} - 2\cdot \frac{1}{2} > 0$.

\emph{Case 3c:} $w$ has five 3-neighbors. \\
Lemma~\ref{3nbrsandexpfaces} implies $w$ is not on any $4^-$-faces because such a face would be expensive.
So by (R7), and (R8) $w$ gives at most $\frac{1}{2}$ to each face, and thus $c^*(w)\geq 4-5 \cdot \frac{1}{2} >0 $.

\smallskip

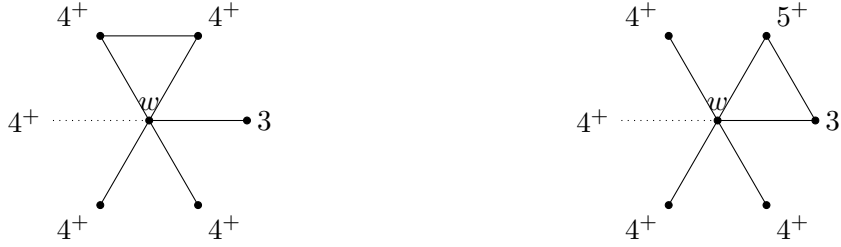
\begin{figure} \centering
\begin{subfigure}[b]{.45\linewidth} \centering
\begin{tikzpicture}[scale=.65]
\path (0,0) coordinate (V);
\foreach \j in {1,...,6} {\path (V)++(60-60*\j:2cm) coordinate (X\j);}
\foreach \j in {1,...,3,5,6} {
  \draw (V) -- (X\j);
  \fill[draw=black] (X\j) circle (2pt); }
\draw[dotted] (V) -- (X4);
\draw (X5) -- (X6);
\fill[draw=black] (V) circle (2pt);
\node[above] at (V) {$w$}; 
\node[right] at (X1) {$3$}; 
\node[below right] at (X2) {$4^+$}; 
\node[below left] at (X3) {$4^+$}; 
\node[left] at (X4) {$4^+$}; 
\node[above left] at (X5) {$4^+$}; 
\node[above right] at (X6) {$4^+$}; 
\end{tikzpicture}
\end{subfigure}
~
\begin{subfigure}[b]{.45\linewidth} \centering
\begin{tikzpicture}[scale=.65]
\path (0,0) coordinate (V);
\foreach \j in {1,...,6} {\path (V)++(60-60*\j:2cm) coordinate (X\j);}
\foreach \j in {1,...,3,5,6} {
  \draw (V) -- (X\j);
  \fill[draw=black] (X\j) circle (2pt); }
\draw[dotted] (V) -- (X4);
\draw (X1) -- (X6);
\fill[draw=black] (V) circle (2pt); 
\node[above] at (V) {$w$}; 
\node[right] at (X1) {$3$}; 
\node[below right] at (X2) {$4^+$}; 
\node[below left] at (X3) {$4^+$}; 
\node[left] at (X4) {$4^+$}; 
\node[above left] at (X5) {$4^+$}; 
\node[above right] at (X6) {$5^+$}; 
\end{tikzpicture}
\end{subfigure}
\caption{Cases 3b and 4b: $d(w)\in\{5,6\}$; $w$ has exactly one 3-neighbor.}
\label{fig:3b4b}
\end{figure}

\textbf{Case 4:} $d(w) = 6$. \\
We have $c(w) = 6$. 
Lemma~\ref{3nbrsandexpfaces} implies that $w$ does not have exactly two or three 3-neighbors. 
We break into cases based on the number of 3-neighbors of $w$.

\emph{Case 4a:} $w$ has no 3-neighbors. \\
Lemma~\ref{3triangles} implies that $w$ is on at most four 3-faces. By (R3), $w$ gives 1 to each incident 3-face and by (R5), (R6), and (R8) $w$ gives at most $\frac{1}{2}$ to each other incident face.
Thus $c^*(w)\geq 6 - 4 \cdot 1 - 2 \cdot \frac{1}{2} >0$.

\emph{Case 4b:} $w$ has exactly one 3-neighbor $x$. \\
Lemma~\ref{triangleand4-vtx} implies $w$ is incident to at most one 3-face as in Figure~\ref{fig:3b4b}. If $w$ has no incident 3-faces, then by (R5)--(R8) $w$ gives at most $\frac{3}{4}$ to the faces with $x$ and at most $\frac{1}{2}$ to each other incident face. Thus $c^*(w) \ge 6 - 2 \cdot \frac{3}{4} - 4 \cdot \frac{1}{2} >0$. If $w$ is on a 3-face $f$ containing $x$, then by (R1) $w$ gives $\frac{3}{2}$ to $f$, and by (R5), (R7), and (R8) $w$ gives at most $\frac{3}{4}$ to the other incident face containing $x$, and at most $\frac{1}{2}$ to each other incident face. So 
$c^*(w)\geq 6 - 1 \cdot \frac{3}{2} - 1 \cdot \frac{3}{4} - 4 \cdot \frac{1}{2} >0$.  
Finally, if $w$ is on a 3-face $f$ that does not contain $x$, then by (R5)--(R8) $w$ gives at most $\frac{3}{4}$ to each incident face containing $x$.
By (R2), and (R3), $w$ gives at most $\frac{5}{4}$ to $f$, and by (R5), (R6), and (R8) $w$ gives at most $\frac{1}{2}$ to the remaining incident faces. Thus  
$c^*(w) \geq 6 - 2 \cdot \frac{3}{4} - 1 \cdot \frac{5}{4} - 3 \cdot \frac{1}{2} >0$.

\emph{Case 4c:} $w$ has exactly four 3-neighbors. \\
Lemma~\ref{3nbrsandexpfaces} implies that $w$ is on no expensive faces.
So $w$ is on at most one 3-face, to which $w$ gives at most $\frac{5}{4}$.
Furthermore, $w$ gives at most $\frac{3}{4}$ to each other incident face.
Thus $c^*(w) \geq 6 - 1 \cdot \frac{5}{4} - 5 \cdot \frac{3}{4} >0$.

\emph{Case 4d:} $w$ has exactly five 3-neighbors. \\
Lemma~\ref{3nbrsandexpfaces} implies that $w$ is on at most one expensive face $f$.
So $w$ gives at most $\frac{3}{2}$ to $f$, if it exists, and at most $\frac{3}{4}$ to any other incident face.
Thus $c^*(v) \geq 6 - 1 \cdot \frac{3}{2} - 5 \cdot \frac{3}{4} >0$.

\emph{Case 4e:} $w$ has six 3-neighbors. \\
Lemma~\ref{3nbrsandexpfaces} implies that $w$ is on at most two expensive faces $f$ and $f'$.
Lemma~\ref{adjacent3and3} implies that these are 4-faces.
So $w$ gives at most 1 to $f$, at most 1 to $f'$, and at most $\frac{3}{4}$ to any other incident face.
Thus $c^*(v)\geq 6- 2 \cdot 1 - 4 \cdot \frac{3}{4} >0$.

\smallskip

\textbf{Case 5:} $d(w) = 7$. \\
We have $c(w) = 8$, and Lemma~\ref{3nbrsandexpfaces} implies that $w$ does not have exactly two 3-neighbors.
We break into cases based on the number of 3-neighbors of $w$. 

\emph{Case 5a:} $w$ has no 3-neighbors. \\
By (R2), $w$ gives at most $\frac{5}{4}$ to a costly 3-face, and $w$ is incident to at most 4 such faces. By (R3),and (R5)--(R8), $w$ gives at most $1$ to each other incident face. Additionally, if $w$ is on four costly 3-faces  (Figure~\ref{fig:T5a}), then Lemma~\ref{4vertex3-cycle} implies that at least one of the other faces containing $w$ is not a 3-face 
and thus by (R5)--(R9) takes at most $\frac{1}{2}$ from $w$.
Thus either $c^*(w)\geq 8 - 3 \cdot \frac{5}{4} - 4 \cdot 1 > 0$ or $c^*(w)\ge 8 - 4 \cdot \frac{5}{4} - 2 \cdot 1 - 1 \cdot \frac{1}{2} >0$ based on whether or not $w$ is on four costly 3-faces. 

\tikzstyle{tom label} = [draw=none, fill=none, right]
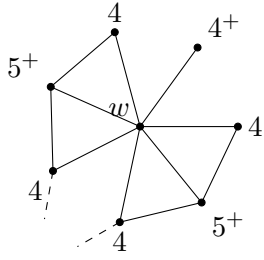
\begin{figure} \centering
\begin{tikzpicture}[scale=.65]
\path (0,0) coordinate (V);
\foreach \j in {1,...,7} {\path (V)++(51-51*\j:2cm) coordinate (X\j);}
\path (X3)++(210:1cm) coordinate (X31);
\path (X4)++(260:1cm) coordinate (X41);
\foreach \j in {1,...,7} {
  \draw (V) -- (X\j);
  \fill[draw=black] (X\j) circle (2pt); }
\draw (X1) -- (X2);
\draw (X2) -- (X3);
\draw (X4) -- (X5);
\draw (X5) -- (X6);
\draw[dashed] (X3) -- (X31);
\draw[dashed] (X4) -- (X41);
\fill[draw=black] (V) circle (2pt); 
\node[above left] at (V) {$w$}; 
\node[right] at (X1) {$4$}; 
\node[below right] at (X2) {$5^+$}; 
\node[below] at (X3) {$4$}; 
\node[below left] at (X4) {$4$}; 
\node[above left] at (X5) {$5^+$}; 
\node[above] at (X6) {$4$}; 
\node[above right] at (X7) {$4^+$}; 
\end{tikzpicture}
\caption{Case 5a: $d(w)=7$; $w$ has no 3-neighbors; subcase where $w$ is on four costly 3-faces.}
\label{fig:T5a}
\end{figure}

\emph{Case 5b:} $w$ has exactly one 3-neighbor $x$. \\
Lemma~\ref{triangleand4-vtx} implies that at most one 3-face contains $w$ and $x$;
if such a face exists (Figure~\ref{fig:5b}$(i)$), then $w$ gives $\frac{3}{2}$ to it by (R1).
By (R5)--(R8), $w$ gives at most $\frac{3}{4}$ to any other face incident to $x$ and $w$.
Lemma~\ref{adjacenttriangle} implies that $w$ is incident to at most four costly 3-faces.
By (R2), $w$ gives $\frac{5}{4}$ to each costly 3-face.
Finally, by (R3), and (R6)--(R8) $w$ gives at most $1$ to each other incident face.
If $w$ is on at most three costly 3-faces, then $c^*(w) \geq 8 - 1 \cdot \frac{3}{2} - 1 \cdot \frac{3}{4} - 2 \cdot \frac{5}{4} - 2 \cdot 1 = 0$.
Otherwise, $w$ is on four costly 3-faces (Figure~\ref{fig:5b}$(ii)$), then neither of the faces containing $x$ are 3-faces.
Thus $c^*(w) \geq 8 - 2 \cdot \frac{3}{4} - 4 \cdot \frac{5}{4} - 1 \cdot 1 > 0$.

To show that $c^*(w) > 0$, we must only consider the case that $x$ and $w$ share an expensive 3-face and a 4-face, and that $w$ is on exactly three costly 3-faces;
possible instances of this configuration are shown in Figure~\ref{fig:T5b}.
However, Lemma~\ref{4vertex3-cycle} implies that if $w$ is contained in three costly 3-faces, then $w$ is contained in a $4^+$-face, which takes at most $\frac{1}{2}$ from $w$.
Thus $c^*(w)\ge\frac{1}{2}$.

\tikzstyle{tom label} = [draw=none, fill=none, right]
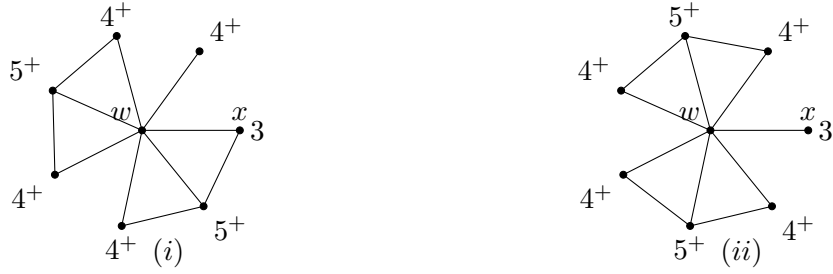
\begin{figure} \centering
\begin{subfigure}[b]{.45\linewidth} \centering
\begin{tikzpicture}[scale=.65]
\path (0,0) coordinate (V);
\foreach \j in {1,...,7} {\path (V)++(51-51*\j:2cm) coordinate (X\j);}
\foreach \j in {1,...,7} {
  \draw (V) -- (X\j);
  \fill[draw=black] (X\j) circle (2pt); }
\draw (X1) -- (X2);
\draw (X2) -- (X3);
\draw (X4) -- (X5);
\draw (X5) -- (X6);
\fill[draw=black] (V) circle (2pt); 
\node[above left] at (V) {$w$}; 
\node[right] at (X1) {$3$}; 
\node[above] at (X1) {$x$}; 
\node[below right] at (X2) {$5^+$}; 
\node[below] at (X3) {$4^+$}; 
\node[below left] at (X4) {$4^+$}; 
\node[above left] at (X5) {$5^+$}; 
\node[above] at (X6) {$4^+$}; 
\node[above right] at (X7) {$4^+$}; 
\node[tom label] at (0,-2.5) {$(i)$};
\end{tikzpicture}
\end{subfigure}
~
\begin{subfigure}[b]{.45\linewidth} \centering
\begin{tikzpicture}[scale=.65]
\path (0,0) coordinate (V);
\foreach \j in {1,...,7} {\path (V)++(51-51*\j:2cm) coordinate (X\j);}
\foreach \j in {1,...,7} {
  \draw (V) -- (X\j);
  \fill[draw=black] (X\j) circle (2pt); }
\draw (X2) -- (X3);
\draw (X3) -- (X4);
\draw (X5) -- (X6);
\draw (X6) -- (X7);
\fill[draw=black] (V) circle (2pt); 
\node[above left] at (V) {$w$}; 
\node[right] at (X1) {$3$}; 
\node[above] at (X1) {$x$}; 
\node[below right] at (X2) {$4^+$}; 
\node[below] at (X3) {$5^+$}; 
\node[below left] at (X4) {$4^+$}; 
\node[above left] at (X5) {$4^+$}; 
\node[above] at (X6) {$5^+$}; 
\node[above right] at (X7) {$4^+$}; 
\node[tom label] at (0,-2.5) {$(ii)$};
\end{tikzpicture}
\end{subfigure}
\caption{Case 5b: $d(w)=7$; $w$ has exactly one 3-neighbor $x$.}
\label{fig:5b}
\end{figure}

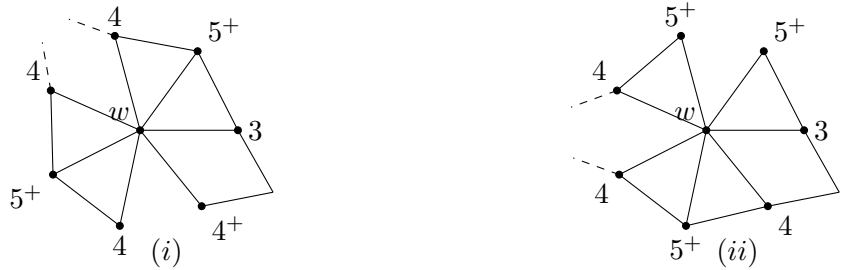
\begin{figure} \centering
\begin{subfigure}[b]{.45\linewidth} \centering
\begin{tikzpicture}[scale=.65]
\path (0,0) coordinate (V);
\foreach \j in {1,...,7} {\path (V)++(51-51*\j:2cm) coordinate (X\j);}
\path (V)++(-25:3cm) coordinate (X11);
\foreach \j in {1,...,7} {
  \draw (V) -- (X\j);
  \fill[draw=black] (X\j) circle (2pt); }
\draw (X1) -- (X11);
\draw (X11) -- (X2);
\draw (X3) -- (X4);
\draw (X4) -- (X5);
\draw (X6) -- (X7);
\draw (X7) -- (X1);
\path (X5)++(100:1cm) coordinate (X51);
\path (X6)++(160:1cm) coordinate (X61);
\draw[dashed] (X5) -- (X51);
\draw[dashed] (X6) -- (X61);
\fill[draw=black] (V) circle (2pt); 
\node[above left] at (V) {$w$}; 
\node[right] at (X1) {$3$}; 
\node[below right] at (X2) {$4^+$}; 
\node[below] at (X3) {$4$}; 
\node[below left] at (X4) {$5^+$}; 
\node[above left] at (X5) {$4$}; 
\node[above] at (X6) {$4$}; 
\node[above right] at (X7) {$5^+$}; 
\node[tom label] at (0,-2.5) {$(i)$};
\end{tikzpicture}
\end{subfigure}
~
\begin{subfigure}[b]{.45\linewidth} \centering
\begin{tikzpicture}[scale=.65]
\path (0,0) coordinate (V);
\foreach \j in {1,...,7} {\path (V)++(51-51*\j:2cm) coordinate (X\j);}
\path (V)++(-25:3cm) coordinate (X11);
\foreach \j in {1,...,7} {
  \draw (V) -- (X\j);
  \fill[draw=black] (X\j) circle (2pt); }
\draw (X1) -- (X11);
\draw (X11) -- (X2);
\draw (X2) -- (X3);
\draw (X3) -- (X4);
\draw (X5) -- (X6);
\draw (X7) -- (X1);
\path (X4)++(160:1cm) coordinate (X41);
\path (X5)++(200:1cm) coordinate (X51);
\draw[dashed] (X4) -- (X41);
\draw[dashed] (X5) -- (X51);
\fill[draw=black] (V) circle (2pt); 
\node[above left] at (V) {$w$}; 
\node[right] at (X1) {$3$}; 
\node[below right] at (X2) {$4$}; 
\node[below] at (X3) {$5^+$}; 
\node[below left] at (X4) {$4$}; 
\node[above left] at (X5) {$4$}; 
\node[above] at (X6) {$5^+$}; 
\node[above right] at (X7) {$5^+$}; 
\node[tom label] at (0,-2.5) {$(ii)$};
\end{tikzpicture}
\end{subfigure}
\caption{Case 5b continued: $d(w)=7$; $w$ has exactly one 3-neighbor.}
\label{fig:T5b}
\end{figure}

\emph{Case 5c:} $w$ has exactly three 3-neighbors (Figure~\ref{fig:5c}). \\
Lemma~\ref{3nbrsandexpfaces} implies that $w$ is not on any expensive faces.
Also, $w$ is on at most two costly 3-faces.
By (R2), $w$ gives $\frac{5}{4}$ to any costly 3-face and by (R3), and (R5)--(R8) $w$ gives at most 1 to each other incident face.
Thus $c^*(w) \geq 8 - 2 \cdot \frac{5}{4} - 5 \cdot 1 >0$.

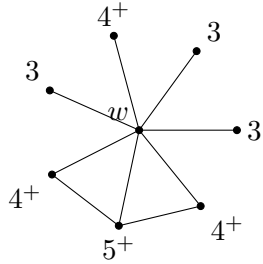
\begin{figure} \centering
\begin{tikzpicture}[scale=.65]
\path (0,0) coordinate (V);
\foreach \j in {1,...,7} {\path (V)++(51-51*\j:2cm) coordinate (X\j);}
\foreach \j in {1,...,7} {
  \draw (V) -- (X\j);
  \fill[draw=black] (X\j) circle (2pt); }
\draw (X2) -- (X3);
\draw (X3) -- (X4);
\fill[draw=black] (V) circle (2pt); 
\node[above left] at (V) {$w$}; 
\node[right] at (X1) {$3$}; 
\node[below right] at (X2) {$4^+$}; 
\node[below] at (X3) {$5^+$}; 
\node[below left] at (X4) {$4^+$}; 
\node[above left] at (X5) {$3$}; 
\node[above] at (X6) {$4^+$}; 
\node[above right] at (X7) {$3$}; 
\end{tikzpicture}
\caption{Case 5c: $d(w)=7$; $w$ has exactly three 3-neighbors.}
\label{fig:5c}
\end{figure}

\emph{Case 5d:} $w$ has exactly four 3-neighbors. \\
Lemma~\ref{3nbrsandexpfaces} implies that $w$ is on at most one expensive face. Additionally, $w$ is on at most two costly 3-faces. So $c^*(w) \geq 8 - 1 \cdot \frac{3}{2} - 2 \cdot \frac{5}{4} - 4 \cdot 1 = 0$. 

To show $c^*(w) >0$, we must only consider the case that $w$ is on one expensive 3-face, no expensive 4-faces, and two costly 3-faces. However, if $w$ is contained in two costly 3-faces, then the four 3-neighbors of $w$ are consecutive in the cyclic order around $w$.
Thus $w$ does not have an expensive 3-face and $c^*(w) > 0$. 

\emph{Case 5e:} $w$ has exactly five 3-neighbors. \\
Lemma~\ref{3nbrsandexpfaces} implies that $w$ is on at most two expensive faces.
By considering the cyclic ordering of the 3-neighbors and the number of expensive faces, we have that $w$ is on at most three 3-faces and if $w$ is on three 3-faces, then at least one is neither expensive nor costly.
If $w$ is on two expensive 3-faces, then $c^*(w)\geq 8 - 2 \cdot \frac{3}{2} - 1 - 4 \cdot \frac{3}{4} >0$.
If $w$ is on at most one expensive 3-face, then $c^*(w) \geq 8 - 1 \cdot \frac{3}{2} - 1 \cdot \frac{5}{4} - 5 \cdot 1 >0$.

\emph{Case 5f:} $w$ has exactly six 3-neighbors. \\
Lemma~\ref{3nbrsandexpfaces} implies that $w$ is on at most three expensive faces and at most one 3-face.
Thus $c^*(w) \geq 8 - 3 \cdot \frac{3}{2} - 1 \cdot 1 - 3 \cdot \frac{3}{4} >0$.

\emph{Case 5g:} $w$ has seven 3-neighbors. \\
Lemma~\ref{adjacent3and3} implies that $w$ is on no 3-faces, so by (R4), and (R7) $c^*(w) \geq 8-7 \cdot 1 = 1$.

\smallskip

\textbf{Case 6:} $d(w) = 8$. \\
We have $c(w) = 10$, and we break into cases based on the configurations of 3-faces around $w$. 

Consider the faces in cyclic order around $w$.
A \emph{run} of 3-faces around $w$ is a set of 3-faces $f_1,\ldots,f_k$ such that for $i\in[k-1]$, faces $f_i$ and $f_{i+1}$ share an edge of the form $wu$ for some vertex $u$.
We consider the maximal runs of 3-faces.
If there are at least three consecutive 3-faces around $w$, then Lemma~\ref{adjacenttriangle} implies that the 3-faces that have two adjacent 3-faces around $w$ have only $5^+$-vertices.
By (R3), $w$ gives at most 1 to these 3-faces, and by (R1)--(R3) $w$ gives at most $\frac{3}{2}$ to other 3-faces.
Lastly, by (R4)--(R8) $w$ gives at most $1$ to each incident $4^+$-face.

\emph{Case 6a:} $w$ is on at most four 3-faces. \\
By (R1)--(R3), $w$ gives at most $\frac{3}{2}$ to each of them, and by (R4)--(R8) $w$ gives at most 1 to each other incident face.
Thus $c^*(w)\geq 10-4\cdot\frac{3}{2}-4\cdot1=0$.
To show $c^*(w) > 0$, we must only consider the case that $w$ is on exactly four 3-faces, all of which are expensive; possible instances of this configuration are shown in Figure~\ref{fig:T6a}.
Lemmas~\ref{adjacenttriangle} and~\ref{3and4faces} imply that at least one of the $4^+$-faces containing $w$ and a 3-neighbor of $w$ takes at most $\frac{3}{4}$ from $w$.
Thus $c^*(w)\ge10 - 4 \cdot \frac{3}{2} - 1 \cdot \frac{3}{4} - 3 \cdot 1 >0$.

\begin{figure} \centering
\begin{subfigure}[b]{.45\linewidth} \centering
\begin{tikzpicture}[scale=.65]
\path (0,0) coordinate (V);
\path (V)++(70:.5cm) coordinate (V1);
\foreach \j in {1,...,8} {\path (V)++(45-45*\j:2cm) coordinate (X\j);}
\foreach \j in {1,...,8} {
  \draw (V) -- (X\j);
  \fill[draw=black] (X\j) circle (2pt); }
\draw (X1) -- (X2);
\draw (X2) -- (X3);
\draw (X6) -- (X7);
\draw (X7) -- (X8);
\path (X5)++(140:1cm) coordinate (X51);
\path (X6)++(180:1cm) coordinate (X61);
\draw[dashed] (X5) -- (X51);
\draw[dashed] (X6) -- (X61);
\path (X8)++(360:1cm) coordinate (X81);
\path (X1)++(40:1cm) coordinate (X11);
\draw[dashed] (X8) -- (X81);
\draw[dashed] (X1) -- (X11);
\fill[draw=black] (V) circle (2pt); 
\node at (V1) {$w$}; 
\node[right] at (X1) {$3$}; 
\node[below right] at (X2) {$5^+$}; 
\node[below] at (X3) {$3$}; 
\node[below left] at (X4) {$3^+$}; 
\node[left] at (X5) {$3^+$}; 
\node[above left] at (X6) {$3$}; 
\node[above] at (X7) {$5^+$}; 
\node[above right] at (X8) {$3$}; 
\end{tikzpicture}
\end{subfigure}
~
\begin{subfigure}[b]{.45\linewidth} \centering
\begin{tikzpicture}[scale=.65]
\path (0,0) coordinate (V);
\path (V)++(70:.5cm) coordinate (V1);
\foreach \j in {1,...,8} {\path (V)++(45-45*\j:2cm) coordinate (X\j);}
\foreach \j in {1,...,8} {
  \draw (V) -- (X\j);
  \fill[draw=black] (X\j) circle (2pt); }
\draw (X1) -- (X2);
\draw (X3) -- (X4);
\draw (X5) -- (X6);
\draw (X7) -- (X8);
\path (X4)++(180:1cm) coordinate (X41);
\path (X5)++(220:1cm) coordinate (X51);
\draw[dashed] (X4) -- (X41);
\draw[dashed] (X5) -- (X51);
\path (X8)++(360:1cm) coordinate (X81);
\path (X1)++(40:1cm) coordinate (X11);
\draw[dashed] (X8) -- (X81);
\draw[dashed] (X1) -- (X11);
\fill[draw=black] (V) circle (2pt); 
\node at (V1) {$w$}; 
\node[right] at (X1) {$3$}; 
\node[below right] at (X2) {$5^+$}; 
\node[below] at (X3) {$5^+$}; 
\node[below left] at (X4) {$3$}; 
\node[left] at (X5) {$3$}; 
\node[above left] at (X6) {$5^+$}; 
\node[above] at (X7) {$3$}; 
\node[above right] at (X8) {$5^+$}; 
\end{tikzpicture}
\end{subfigure}
\caption{Case 6a: $d(w)=8$; subcase where $w$ is on exactly four 3-faces, all of which are expensive.}
\label{fig:T6a}
\end{figure}
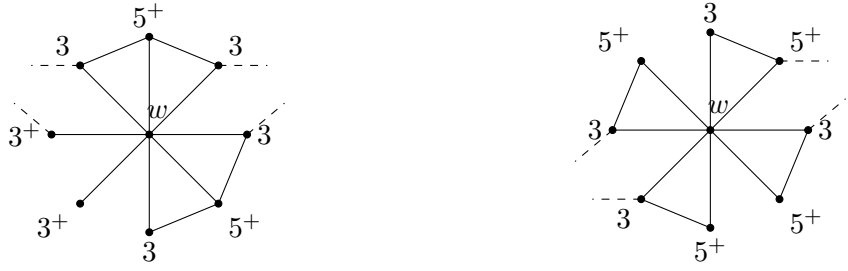

\emph{Case 6b:} $w$ has at most two maximal runs of 3-faces. \\
Thus $c^*(w)\geq 10 - 4 \cdot \frac{3}{2} - 4 \cdot 1 = 0$. Note that if $w$ is on at least six 3-faces, then we are in this case.

To show $c^*(w) > 0$, we must only consider the case that $w$ is on at least five 3-faces, there are two maximal runs of 3-faces, and the first and last 3-faces on the runs are expensive;
possible instances of this configuration are shown in Figure~\ref{fig:T6b}.
Lemma~\ref{3and4faces} implies that the runs end with $4^+$-faces that take at most $\frac{3}{4}$ from $w$ under (R5).
Thus $c^*(w)\ge10 - 4 \cdot \frac{3}{2} - 1 \cdot \frac{3}{4} - 3 \cdot 1 > 0
$.
\begin{figure} \centering
\begin{subfigure}[b]{.45\linewidth} \centering
\begin{tikzpicture}[scale=.65]
\path (0,0) coordinate (V);
\path (V)++(70:.5cm) coordinate (V1);
\foreach \j in {1,...,8} {\path (V)++(45-45*\j:2cm) coordinate (X\j);}
\foreach \j in {1,...,8} {
  \draw (V) -- (X\j);
  \fill[draw=black] (X\j) circle (2pt); }
\draw (X1) -- (X2);
\draw (X2) -- (X3);
\draw (X3) -- (X4);
\draw (X5) -- (X6);
\draw (X6) -- (X7);
\path (X4)++(180:1cm) coordinate (X41);
\path (X5)++(220:1cm) coordinate (X51);
\draw[dashed] (X4) -- (X41);
\draw[dashed] (X5) -- (X51);
\path (X8)++(360:1cm) coordinate (X81);
\path (X1)++(40:1cm) coordinate (X11);
\draw[dashed] (X8) -- (X81);
\draw[dashed] (X1) -- (X11);
\fill[draw=black] (V) circle (2pt); 
\node at (V1) {$w$}; 
\node[right] at (X1) {$3$}; 
\node[below right] at (X2) {$5^+$}; 
\node[below] at (X3) {$5^+$}; 
\node[below left] at (X4) {$3$}; 
\node[left] at (X5) {$3$}; 
\node[above left] at (X6) {$5^+$}; 
\node[above] at (X7) {$3$}; 
\node[above right] at (X8) {$3^+$}; 
\end{tikzpicture}
\end{subfigure}
~
\begin{subfigure}[b]{.45\linewidth} \centering
\begin{tikzpicture}[scale=.65]
\path (0,0) coordinate (V);
\path (V)++(70:.5cm) coordinate (V1);
\foreach \j in {1,...,8} {\path (V)++(45-45*\j:2cm) coordinate (X\j);}
\foreach \j in {1,...,8} {
  \draw (V) -- (X\j);
  \fill[draw=black] (X\j) circle (2pt); }
\draw (X1) -- (X2);
\draw (X2) -- (X3);
\draw (X3) -- (X4);
\draw (X4) -- (X5);
\draw (X7) -- (X8);
\path (X5)++(140:1cm) coordinate (X51);
\path (X6)++(180:1cm) coordinate (X61);
\draw[dashed] (X5) -- (X51);
\draw[dashed] (X6) -- (X61);
\path (X8)++(360:1cm) coordinate (X81);
\path (X1)++(40:1cm) coordinate (X11);
\draw[dashed] (X8) -- (X81);
\draw[dashed] (X1) -- (X11);
\fill[draw=black] (V) circle (2pt); 
\node at (V1) {$w$}; 
\node[right] at (X1) {$3$}; 
\node[below right] at (X2) {$5^+$}; 
\node[below] at (X3) {$5^+$}; 
\node[below left] at (X4) {$5^+$}; 
\node[left] at (X5) {$3$}; 
\node[above left] at (X6) {$3^+$}; 
\node[above] at (X7) {$5^+$}; 
\node[above right] at (X8) {$3$}; 
\end{tikzpicture}
\end{subfigure}
\caption{Case 6b: $d(w)=8$; subcase where $w$ is on at least five 3-faces.}
\label{fig:T6b}
\end{figure}
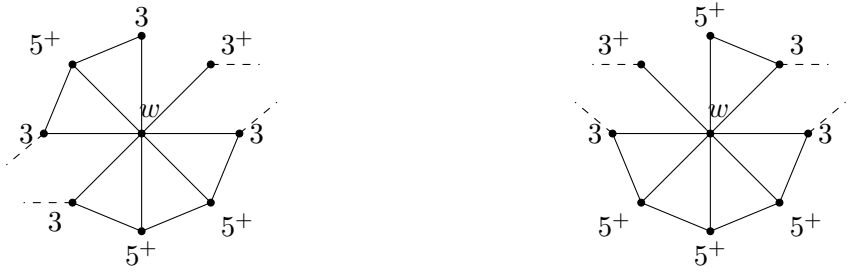

\emph{Case 6c:} $w$ is on exactly five 3-faces that form exactly three maximal runs. \\
By Lemma~\ref{3and4faces}, none of the $4^+$-faces are expensive, so $w$ gives at most $\frac{3}{4}$ to each.
Thus $c^*(w)\geq 10 - 5 \cdot \frac{3}{2} - 3 \cdot \frac{3}{4} >0$.

\smallskip

\textbf{Case 7:} $w$ is a $9^+$-vertex. \\
By Lemma~\ref{adjacenttriangle}, 
the number of 3-faces containing $w$ and a $4^-$-vertex is at most $\floor{2d(w)/3}$.
By (R1), and (R2), $w$ gives at most $\frac{3}{2}$ to each such 3-face, and (R3)--(R8) imply that $w$ gives at most 1 to each other incident face.  If $d(w)=9$, then Lemma~\ref{3and4faces} implies that $w$ cannot be on six expensive 3-faces and three expensive 4-faces so $c^*(w) > 12 - 6 \cdot \frac{3}{2} - 3 \cdot 1 = 0$. If $d(w) \ge 10$, then \[ c^*(w)\geq (2d(w)-6)  - d(w) \cdot 1 - \frac{2d(w)}{3} \cdot \frac{1}{2} = \frac{2d(w)}{3} - 6 > 0. \]

\bigskip
\bigskip 

Finally, we know that 
\[\sum_{x \in V(G) \cup F(G)}\hspace{-16pt} c^*(x) = \sum_{x \in V(G) \cup F(G)}\hspace{-16pt} c(x) \le 0,\] 
and we have shown that every vertex and face ends with nonnegative charge.  Furthermore, we have shown that only vertices of degree 3 or 4 can end with zero charge, so 
we conclude that every vertex of $G$ has degree 3 or 4.
Lemma~\ref{adjacent3and4} now implies that $G$ is 4-regular.
Since every $6^+$-face ends with positive charge, Lemmas~\ref{4vertex3-cycle} and~\ref{4444face} imply that every face of $G$ is a 5-face.
However, then by (R8) every face of $G$ ends with positive charge, a contradiction. 
\end{proof}

Lemma~\ref{lem:discharge} completes the proof of Theorem~\ref{thm:maintorus}. We note that if $G$ is a planar graph, (R9) and Lemma~\ref{4444face} are not needed.

%\section{3-dynamic coloring toroidal graphs}\label{sec:toroidal3}
%\subfile{torusAdjustments.tex}

\section{General bounds}\label{sec:rdynamic}

In this section we consider larger genus and larger $r$.  We prove Theorem~\ref{lem:contract}, giving bounds on the $r$-dynamic paintability of graphs in terms of their genus for every $r$.

Given a graph $G$ and edge $uv\in E(G)$, the \emph{weight} of $uv$, denoted $w(uv)$, is $d(u)+d(v)$.
Borodin~\cite{Borodin} proved that planar graphs with minimum degree at least 3 have an edge of weight at most 13.
Ivan\v{c}o extended this to a sharp bound for every orientable surface.

\begin{lemma}[Ivan\v{c}o~\cite{ivanco}]\label{thm:ivanco}
If $G$ be is simple graph with genus $g$ such that $\delta(G)\geq3$, then $G$ has an edge of weight at most
\[ \begin{cases}2g+13&\text{if $0\leq g\leq2$,}\\4g+7&\text{otherwise.}\end{cases}\]
\end{lemma}

We say that a vertex $w$ is \emph{$r$-dull} if fewer than $\min\set{r,d(w)}-1$ distinct colors appear on $N(w)$.

\begin{thmgeneral}
Let $G$ be a graph, and let $g=\gamma(G)$.
\begin{enumerate}
\item  If $g \le 2$ and $r \ge 2g + 11$, then $\xp_r(G) \le (g+5)(r+1)+3$
\item If $g \ge 3$ and $r \ge 4g + 5$, then $\xp_r(G) \le (2g+2)(r+1)+3$.
\end{enumerate}
\end{thmgeneral}
\begin{proof}
We use induction on $|V(G)|$.
If $|V(G)|\le4$, then $G$ is planar and $\xp_r(G)\le |V(G)| < 5r+8$, 
so our base case is complete.

Let 
\begin{align*}
 \ell&=\begin{cases}(g+5)(r+1)+3&\text{if $g\leq2$,}\\(2g+2)(r+1)+3&\text{otherwise,}\end{cases} \\
 \omega&=\begin{cases}2g+13&\text{if $g\leq2$,}\\4g+7&\text{otherwise.}\end{cases}
\end{align*}

Let $f$ be a token assignment to $G$ with $f(v)=\ell$ for all $v\in V(G)$. Suppose $G$ has a $2^-$-vertex $v$. Since $\ell>10$, by Lemma~\ref{2^-vertex} $v$ is reducible and applying induction to $G-v$ completes the proof. Therefore we may assume that $\delta(G)\ge3$.
By Lemma~\ref{thm:ivanco}, there exists an edge $uv\in E(G)$ with weight at most $\omega$. 
Suppose $d(u)\le d(v)$, and let $G'$ be obtained by contracting $uv$.
Let $v$ be the vertex that ``absorbs'' the edges from $u$, deleting multiedges.

Because $G'$ was formed through edge contraction, we have $\gamma(G')\le\gamma(G)$. 
Since the bound for $g = 2$ is less than the bound for $g = 3$,
the induction hypothesis implies that $G'$ is $r$-dynamically $\ell$-paintable.
Since $d_{G'}(v)\leq \omega-2\leq r$ and $d_G(u)\leq\omega-3<r$, $G'$ is \extend{r} to $G$.

Painter colors $v$ according to when $v$ was colored in $G'$. Painter rejects $u$ when any vertex of $N_G(u)$ or $N_G(v)$ is colored. Additionally, 
for each $w\in N_G(u)$, Painter rejects $u$ when $w$ is $r$-dull and a neighbor of $w$ is being colored.
Thus $u$ is rejected at most $d(u)+d(v)-1+(r-1)(d(u)-1)$ times.
We have $d_G(u)\leq(\omega-1)/2$ (since $\omega$ is odd) and $d_G(u)+d_G(v)\leq\omega$.
Simplifying, we get 
\begin{align*}
d(u) + d(v) -1 + (r-1)(d(u)-1) &= (d(u)+d(v)) + (r-1)d(u) -r \\
 &\leq \omega + (r-1)\frac{\omega-1}{2} - r \\
% &= \frac{(r+1)\omega}{2} - \frac{r-1}{2} - r\\
% &= \frac{(r+1)\omega}{2} - \frac{3r+3}{2} + 2 \\
 &= \frac{(\omega-3)(r+1)}{2} + 2 \\
 &= \begin{cases}(g+5)(r+1)+2&\text{if $g\leq2$,}\\(2g+2)(r+1)+2&\text{else}\end{cases} \\
 &= \ell-1.
\end{align*}
Thus Painter rejects $u$ at most $\ell-1$ times, so $G$ is $r$-dynamically $\ell$-paintable.
\end{proof}

Theorem~\ref{lem:contract} is unlikely to be sharp even for the plane.  Hell and Seyffarth~\cite{HS} found examples of planar graphs diameter 2, maximum degree $r$, and $\floor{\frac{3r}{2}}+1$ vertices. For such planar graphs we have $\chi_r(G)=\floor{\frac{3r}{2}}+1$.

\section{Concluding Remarks}

Let $\mathcal{G}_g$ be the family of graphs embeddable on a surface of genus $g$.
Bounds on the $r$-dynamic coloring parameters for graphs of given genus are well known for $r=1$:
for $G\in\mathcal{G}_0$, $\chi_1(G)\le 4$, while $\ch_1(G) \le 5$ and $\xp_1(G)\le5$ with equality achievable for each bound.
%Additionally for $g>0$ and $G\in\mathcal{G}_g$,
%$\chi_1(G)\le\ch_1(G)\le\xp_1(G)\le h(g)$ with equality for $K_{h(g)}$.
%Bounds for the $r$-dynamic coloring parameters for graphs of given genus have also been studied for $r=2$:
%for $G\in\mathcal{G}_0$, we have $\chi_2(G)\le\ch_2(G)\le\xp_2(G)\le5$ with equality for $C_5$.
%For $g>0$ and $G\in\mathcal{G}_g$, we know $\chi_2\le\ch_2\le h(g)$, which leads us to conjecture
%
%\begin{conjecture}\label{conj:hg}
%For $g>0$ and $G\in\mathcal{G}_g$, $\xp_2(G)\le h(g)$.
%\end{conjecture}
%
%
Our main results show that, for $G\in\mathcal{G}_1$,
$\chi_3(G)\le\ch_3(G)\le\xp_3(G)\le 10$.
On the torus, equality is achieved by the Petersen graph, but we ask

\begin{question}
Over $G\in\mathcal{G}_0$, what are $\max\chi_3(G)$, $\max\ch_3(G)$, and $\max\xp_3(G)$?
\end{question}

Recall that $\max_{G\in\mathcal{G}_0}\chi_3(G)\ge7$.
Thus answering the question would involve improving the bound in
Corollary~\ref{cor:mainplane} or showing that equality holds there.
Additionally, determining tight upper bounds for $\max_{G\in\mathcal{G}_g}\xp_r(G)$
for $r=3$ and $g>1$ and for all $r>3$ is of interest.

\begin{question}
Except when $r=1$ and $g=0$, is it true that $\max_{G\in\mathcal{G}_g}\chi_r(G)
=\max_{G\in\mathcal{G}_g}\xp_r(G)$?
\end{question}

%%% reword this
Note that every nonplanar graph $G$ is 2-dynamically $h(\gamma(G))$-paintable~\cite{mahoney}, which implies equality for $r=2$ and $g>0$.
%
%Theorem~\ref{thm:mainplane} and our example in Figure~\ref{fig:K4} show $7 \le \max \chi_3(G) \le \max \ch_3(G) \le \max \xp(G)$. This leads one to ask how sharp these bounds are.
%
%\begin{question} Over planar graphs $G$, what are $\max \chi_3(G)$, $\max \ch_3(G)$, and $\max \xp_r(G)$? 
%\end{question}
%
%The Petersen graph $P$ shows that Theorem~\ref{thm:maintorus} is sharp and furthermore, over toroidal graphs $\max \chi_3(G) = \max \ch_3(G) = \max \xp_3(G)=10$. This leads to the more specific question:
%
%\begin{question}
%Over planar graphs $G$, is $\max \chi_3(G) = \max \ch_3(G) = \max \xp_3(G)$?
%\end{question}
%
%We know that over planar graphs $G$, we have $\max\chi(G) = 4 < 5 = \max\ch(G)=\max\xp(G)$
%but $\max \chi_2(G)=\max \ch_2(G)=\max\xp_2(G)=5$.
%Additionally, recall that for graphs $G$ with $\gamma(G) = g > 0$
%Chen et al.~\cite{CFLSS} showed that $\max \chi(G) = \max \chi_2(G) = \max \ch(G) = \max \ch_2(G)$, which motivates the questions of equality for other fixed $r$.
%
%\begin{question}
%Over graphs $G$ with $\gamma(G) = g$ and for $r \ge 3$
%what are the $\max \chi_r(G)$, $\max \ch_r(G)$, and $\max \xp_r(G)$?
%Is $\max \chi_r(G) = \max \ch_r(G)= \max \xp_r(G)$? 
%\end{question}
%
%Theorem~\ref{lem:contract} gives upper bounds on these maximum values.

We wish to comment that generalizing our proofs of Theorem~\ref{thm:maintorus} to $r\ge4$ is unlikely. An important tool in our reducibility arguments was that we could add enough edges to the remaining graph to force three colors to appear in the neighborhoods of deleted vertices. For $r=3$, we could do this and keep the degree of each vertex at most its degree in the original graph. For $r\ge4$, we would need to force four colors to appear in the neighborhoods of deleted vertices, which requires more edges. This causes problems with planarity and with guaranteeing the vertices to which edges were added have enough colors in their neighborhoods in the original graph.

\section{Acknowledgments}

We wish to thank Ilkyoo Choi for helpful preliminary discussion and Douglas West for helping to improve the exposition.

%%%%%%%% Tom's Notes %%%%%%%%%%%%
% When r=1:
%	g=0: \chi\le4, \ch\le\xp\le5. (everything previously was known)
%	g>0: \chi\le\ch\le\xp\le h(g). (Heawood's degeneracy argument, mentioned in introduction)
% When r=2:
%	g=0: \chi\le\ch\le\xp\le 5 (KLP + schauz, as mentioned in introduction)
%	g>0: Conjecture 1: $\xp_2(G)\le h(g)$ for all graphs $G$.
% When r=3:
%	g=0: Question 1: Can the upper bound in Theorem 1.3 be improved?
%	g=1: \chi\le\ch\le\xp\le10
%	g>1: Improve the ``probably'' loose bounds of Theorem 1.5
% When r>3: Improve the ``probably'' loose bounds of Theorem 1.5

% Question 2: Let $\mathcal{G}_g$ be the family of graphs with embeddable on a surface of genus $g$.
%			Except when $r=1$ and $g=0$, is it true that $\max_{G\in\mathcal{G}_g}\ch_r(G)=\max_{G\in\mathcal{G}_g}\xp_r(G)$.
%			This is motivated by the fact that equality holds for (r,g) pairs (1,g) where g>0, (2,0), and (3,1).
%			Also, Conjecture 1 implies equality for (2,g) for g>0.
%
% Give a brief reason why determining \chi_r for planar graphs is difficult. 
%	When deleting a vertex $v$, we may need to add edges to $N(v)$ in such a way that the resulting graph is nonplanar
%%%%%%%%% End Tom's Notes %%%%%%%%%%%%%%

%\begin{itemize}
%
%Ben's linear-in-genus example; \\
%(un?)sharpness of edge weight lemma; \\
%(un)sharpness of general bounds
%
%
%\item It may be worth looking at Bohme, Mohar, and Stiebitz for both paintability and $r>2$
%\end{itemize}

\bibliographystyle{amsplain}
\bibliography{3dynamicColoringPlanarGraphs}

\end{document}